\documentclass{amsart}
\usepackage[utf8x]{inputenc}
\usepackage{amsmath, amsthm, amssymb}
\usepackage[usenames,dvipsnames,svgnames,table]{xcolor}
\usepackage[margin=1.3in]{geometry}
\usepackage[french,english]{babel}

\newtheorem{theorem}{Theorem}[section]
\newtheorem{proposition}[theorem]{Proposition}
\newtheorem{lemma}[theorem]{Lemma}
\newtheorem*{remark}{Remark}

\newcommand{\R}{\mathbb R}
\newcommand{\T}{\mathbb T}
\newcommand{\cG}{\mathcal G}
\newcommand{\cE}{\mathcal E}

\newcommand{\dd}{\, \mathrm{d}}

\numberwithin{equation}{section}

\newcommand{\commentout}[1]{}

\title[Velocity Decay Estimates for Boltzmann with Hard Potentials]{Velocity Decay Estimates for Boltzmann equation with hard potentials}
\author{Stephen Cameron}
\address{Courant Institute, New York University, New York, NY 10012}
\email{spc6@cims.nyu.edu}
\author{Stanley Snelson}
\address{Department of Mathematical Sciences, Florida Institute of Technology, Melbourne, FL 32901}
\email{ssnelson@fit.edu}

\begin{document}

\begin{abstract}
We establish pointwise polynomial decay estimates in velocity space for the spatially inhomogeneous Boltzmann equation without cutoff, in the case of hard potentials ($\gamma +2s > 2$), under the assumption that the mass, energy, and entropy densities are bounded above, and the mass density is bounded below. These estimates are self-generating, i.e. they do not require corresponding decay assumptions on the initial data. Our results extend the recent work of Imbert-Mouhot-Silvestre (arXiv:1804.06135), which addressed the case of moderately soft potentials ($\gamma + 2s \in [0,2]$). 
\end{abstract}

\maketitle

\section{Introduction}

The Boltzmann equation is a fundamental model in kinetic theory and statistical physics. It describes the evolution of a particle density of a rarefied gas in phase space. A solution $f: [0,T]\times \T^d\times \R^d\to [0,\infty)$ of the Boltzmann equation satisfies
\begin{equation}\label{e:boltzmann}
\partial_t f(t,x,v) + v\cdot \nabla_xf(t,x,v) = Q(f,f)(t,x,v), 
\end{equation}
where (suppressing the dependence on $t$ and $x$)
\begin{equation}
Q(f,f)(v) = \int\limits_{\R^d}\int\limits_{\mathbb S^{d-1}} (f(v_*')f(v') - f(v_*)f(v))B(|v-v'|, \cos(\theta))  \dd \sigma \dd v_*,
\end{equation}
and  
\begin{equation}
v' = \frac{v+v_*}{2}+\frac{|v-v_*|}{2}\sigma, \qquad v_*' = \frac{v+v_*}{2}-\frac{|v-v_*|}{2}\sigma, \quad \text{ and }
\cos(\theta) : = \frac{v-v_*}{|v-v_*|}\cdot \sigma.
\end{equation}
Note that this implies
\begin{equation}
\sin(\theta/2) = \frac{v'-v}{|v'-v|}\cdot \sigma.
\end{equation}
We are interested in the non-cutoff case where the collision kernel $B$ has a nonintegrable singularity near $\theta = 0$, which reflects the fact that long-range interactions are taken into account. In particular, $B$ takes the form 
\begin{equation}
B(r,\cos(\theta)) = r^\gamma b(\cos(\theta)), \qquad b(\cos(\theta))\sim |\sin(\theta/2)|^{-(d-1)-2s},
\end{equation}
where $\gamma> -d$, and $s\in (0,1)$.  More specifically, we consider the case of \emph{hard potentials} where $\gamma\in (0,2)$ and $\gamma +2s > 2$.

The purpose of this article is to study the \emph{a priori} decay of solutions to \eqref{e:boltzmann} for large velocity, under conditional assumptions that we make precise in \eqref{e:hydrodynamicbounds} below. Decay as $|v|\to \infty$ is an important and ever-present issue in the mathematical study of the Boltzmann equation, because of the need to control the integral operator $Q(f,f)$, which has a growing (or slowly-decaying, depending on $\gamma$) factor of $|v-v'|^\gamma$ in its kernel. 

The question of global existence vs. breakdown for large solutions of \eqref{e:boltzmann} (i.e. solutions that are not necessarily close to an equilibrium state) is a celebrated open problem, and may be out of reach with current techniques. A more realistic, but still highly nontrivial, goal was conjectured in \cite{imbert2016weak}: that solutions are $C^\infty$ provided the following estimates on the hydrodynamic quantities (mass, energy, entropy densities) hold for all $(t,x)\in [0,T]\times \T^d$:
\begin{equation}\label{e:hydrodynamicbounds}
\begin{split}
0<m_0 \leq \int\limits_{\R^d} f(t,x,v)\dd v \leq M_0 <\infty,
\\ \int\limits_{\R^d} f(t,x,v)|v|^2 \dd v \leq E_0<\infty,
\\ \int\limits_{\R^d} f(t,x,v)\log(f(t,x,v)) \dd v \leq H_0 <\infty.
\end{split}
\end{equation}
Roughly speaking, this means any singularity in the Boltzmann equation is physically observable at the macroscopic scale. This conjecture was very recently proven, in the case of moderately soft potentials ($\gamma +2s\in [0,2]$) in \cite{imbert2019smooth}. This followed a number of works in the same direction, starting in 2014:
\begin{itemize}
	\item In \cite{silvestre2016boltzmann}, Silvestre showed that solutions satisfying \eqref{e:hydrodynamicbounds} enjoy an \emph{a priori} bound in $L^\infty$, in the case $\gamma < 2$ and $\gamma + 2s > 0$. (A similar conclusion holds for other ranges of $\gamma$ and $s$, but with extra assumptions on $f$.)
	
	\item In \cite{imbert2016weak}, Imbert-Silvestre improved this conclusion to $C^\alpha$ regularity, via a De Giorgi-type estimate.
	
	\item In \cite{LuisMain}, working in the case of moderately soft potentials ($\gamma \in (-2,2)$ and $\gamma+2s\in [0,2]$), Imbert-Mouhot-Silvestre showed that pointwise polynomial decay in $v$ is propagated forward (i.e. if $f_{\rm in}(x,v) \lesssim |v|^{-q}$, then $f(t,x,v) \lesssim |v|^{-q}$) if $\gamma \in (-2,0]$, and in the case $\gamma \in (0,2)$, this decay is self-generating (i.e. $f(t,x,v) \lesssim t^{-p}|v|^{-q}$ some $p,q>0$, for arbitrary initial data).

	\item In \cite{imbert2018schauder}, Imbert-Silvestre established Schauder-type regularity estimates for linear kinetic equations, and finally, in \cite{imbert2019smooth}, the same authors applied these estimates in a delicate bootstrapping procedure to conclude $C^\infty$ smoothness, still under the assumptions \eqref{e:hydrodynamicbounds} and in the moderately soft case.
\end{itemize}
The present article extends the result of \cite{LuisMain}, specifically the self-generating upper bounds, to the case $\gamma + 2s > 2$. 
We should note that decay estimates for the moderately soft potentials case were a crucial ingredient in establishing the higher regularity of solutions in \cite{imbert2019smooth}, and part of the motivation of our main theorem is to open the door to conditional regularity under the assumption \eqref{e:hydrodynamicbounds} in the case of hard potentials. 
The importance of decay in $v$ can also be seen in the higher regularity theory of the closely related Landau equation, where a certain number of moments in $v$ are ``used up'' in each step of the bootstrapping procedure \cite{henderson2017smoothing}.

The main added difficulty in the regime $\gamma+2s>2$ is that our assumptions on hydrodynamic quantities (pointwise mass, energy, entropy bounds as in \eqref{e:hydrodynamicbounds}) no longer guarantee a finite $\gamma+2s$ moment, which is needed to control the integral kernel arising in the Carleman representation of $Q(f,f)$ (see Section \ref{s:prelim}).  In order to deal with this, we will work in the space of solutions satisfying
\begin{equation}\label{e:Kbound}
K_0:= \sup\limits_{(t,x)\in (0,1]\times \T^d} t^{\frac{\gamma+2s}{\gamma}}\int\limits_{\R^d} f(t,x,v) |v|^{\gamma+2s} \dd v <\infty .
\end{equation}
Without loss of generality, we assume the time of existence $T = 1$. The constant $K_0$ is not controlled by any physically meaningful quantity, so we seek estimates that are independent of $K_0$.

\begin{remark}
	We say that a constant is universal if it only depends on the constants $d$, $\gamma$, $s$, $m_0$, $M_0$, $E_0$, $ H_0$, and the cross section $B(r,\cos(\theta))$.  We use the notation $\lesssim$, $\gtrsim$, and $\approx$ to mean a quantity is bounded from above, below, or equivalent up to a universal constant.  In our calculations, we will keep track of how constants depend on $q$ and $K_0$. 
\end{remark}

 Our main result is
\begin{theorem}\label{t:main}
	There exists a universal constant $q_0>0$ such that, for all $q\geq q_0$ and any classical solution $f$ of the Boltzmann equation \eqref{e:boltzmann} satisfying \eqref{e:hydrodynamicbounds} for all $(t,x) \in [0,1]\times \mathbb T^d$, \eqref{e:Kbound}, and 
\begin{equation}\label{e:velocitydecay}
\lim\limits_{|v|\to \infty} |v|^qf(t,x,v) = 0 \qquad \text{uniformly in } (t,x)\in [0,1]\times \T^d,
\end{equation}	
	there holds
	\[f(t,x,v) \leq C_q t^{-q/\gamma - d/2s} (1+|v|)^{-q}, \quad (t,x,v) \in (0,1]\times \T^d \times \R^d,\]
	where $C_q$ depends only on $q$ and universal constants.
\end{theorem}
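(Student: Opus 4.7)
I would adapt the barrier-function strategy of \cite{LuisMain} used for moderately soft potentials, taking as candidate
\[
\phi(t,v) = C_q\, t^{-p}\,(1+|v|)^{-q}, \qquad p = q/\gamma + d/2s,
\]
and aim to show $f \le \phi$ pointwise. The decay hypothesis \eqref{e:velocitydecay}, combined with the $L^\infty$ bound of \cite{silvestre2016boltzmann} (which applies under \eqref{e:hydrodynamicbounds}), ensures $f < \phi$ for all small $t$ and for all large $|v|$. Hence if the bound fails, there is a first contact time $t_0 \in (0,1]$ and an interior point $(x_0,v_0)$ with $f(t_0,x_0,v_0) = \phi(t_0,v_0)$ while $f \le \phi$ on $[0,t_0]\times\T^d\times\R^d$. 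At this point the Boltzmann equation together with $\partial_t f \ge \partial_t\phi = -(p/t_0)\phi$ and $\nabla_x f = 0$ force
\[
Q(f,f)(t_0,x_0,v_0) \ge -(p/t_0)\,\phi(t_0,v_0),
\]
which I would contradict by showing $Q(f,f)(v_0)$ is in fact strictly more negative once $C_q$ is large enough.

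\textbf{Coercive estimate and control of the lower-order term.} I would use the Carleman representation to decompose $Q(f,f) = L_f f(v_0) + c_{d,s}\,f(v_0)\int f(v_*)|v_0-v_*|^\gamma \dd v_*$, where $L_f$ is an integro-differential operator whose kernel $K_f(v_0,\cdot)$ is bounded below, via the hydrodynamic constants $m_0,M_0,E_0,H_0$, by a fractional-Laplacian-type kernel of order $2s$ weighted by $|v_0-v'|^\gamma$. Since $f-\phi\le 0$ with equality at $v_0$,
\[
L_f f(v_0) \le \int K_f(v_0,v')\,\bigl(\phi(v') - \phi(v_0)\bigr) \dd v',
\]
and a direct computation on the explicit barrier (exploiting its concavity in $|v|$ away from the origin and the standard split into near-diagonal and tail regions in $v'$) yields the decisive estimate $L_f\phi(v_0) \le -c_0\,(1+|v_0|)^{\gamma+2s}\,\phi(v_0)$ with $c_0>0$ universal. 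For the lower-order term, split $v_*$ into $|v_*|\le |v_0|/2$, where $|v_0-v_*| \lesssim 1+|v_0|$ and one invokes the mass/energy bounds, and $|v_*|>|v_0|/2$, where $f(v_*)\le \phi(v_*)$ and the tail integrates once $q > \gamma+d$, giving an upper bound of the form $C\,\phi(v_0)(1+|v_0|)^\gamma$. Since $\gamma + 2s > \gamma$, the coercive term dominates for $|v_0|$ large, whereas for moderate $|v_0|$ the bound $-c_0\phi(v_0)/t_0$ beats the threshold $-(p/t_0)\phi(v_0)$ once $C_q$ is chosen large relative to $p$, yielding the contradiction.

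\textbf{Main obstacle.} The principal difficulty is ensuring the final constant $C_q$ is \emph{independent of $K_0$}. Estimating the lower-order integral $\int f(v_*)|v_0-v_*|^\gamma\dd v_*$ for $|v_0|$ not large demands a finite $\gamma$-moment of $f$, but when $\gamma>2$ this is not controlled by \eqref{e:hydrodynamicbounds}, and \eqref{e:Kbound} only provides a $(\gamma+2s)$-moment with the time weight $t^{(\gamma+2s)/\gamma}$. To remove $K_0$ from the final estimate I expect an iteration: first prove a crude version of the theorem with a constant that \emph{does} depend on $K_0$ (using \eqref{e:Kbound} as an input), then feed the resulting decay back into \eqref{e:Kbound} via the barrier itself (with some smaller exponent $q_1$) to upgrade the moment bound to a $K_0$-independent one, and finally bootstrap to arbitrary $q\ge q_0$. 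A secondary technical difficulty is obtaining the coercive constant $c_0$ uniformly in $v_0$: this needs separate treatment near and far from the origin and a careful change of variables in the Carleman kernel that tracks the anisotropic scaling of the hard-potential weight, especially when $v_0$ lies in a regime where the barrier $\phi$ transitions from nearly constant to strongly decaying.
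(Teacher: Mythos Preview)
Your overall strategy (barrier, first-touching argument, Carleman decomposition, and then removing the $K_0$-dependence by feeding the pointwise bound back into the moment) is the paper's strategy. But the middle step---the coercive estimate on $Q_1$---is where the actual work lies, and your sketch of it contains two genuine gaps.

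First, the claimed inequality $L_f\phi(v_0)\le -c_0(1+|v_0|)^{\gamma+2s}\phi(v_0)$ is not correct and, more importantly, the quantity on the left is not obviously negative at all. The barrier $\phi$ is convex in the radial direction, so there are large regions (namely where $|v_0+h|\ll|v_0|$) on which $\phi(v_0+h)-\phi(v_0)$ is huge and positive. A ``near-diagonal vs.\ tail'' split of the $h$-integral does not neutralize this. The paper instead switches the order of integration in the Carleman representation and restricts the \emph{outer} $w$-integral to $|v+w|\le |v|/\sqrt{2q+4}$; this forces $h\perp w$ to be nearly perpendicular to $v$, where $\phi$ really is concave, and yields the good term $\cG(v)\lesssim -q^s|v|^{\gamma}g(v)$ (exponent $\gamma$, not $\gamma+2s$). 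The remaining four error terms $\cE_1,\dots,\cE_4$ are each handled by a different mechanism, and the delicate one $\cE_4$ (where $|v+h|\le|v|/q$) has the \emph{same} $|v|^\gamma$ scaling as $\cG$ and is absorbed only because it lacks the factor $q^s$. So the comparison with $Q_2$ is not ``$\gamma+2s>\gamma$ hence coercive wins for large $|v_0|$''; both sides scale like $|v_0|^\gamma g(v_0)$ and one must track the $q$-dependence of the constants.

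Second, you misplace where $K_0$ enters. Since here $\gamma\in(0,2)$ (the hard-potential assumption is $\gamma+2s>2$, not $\gamma>2$), the lower-order term $Q_2$ needs only a $\gamma$-moment and is bounded directly by mass plus energy: $Q_2(f,f)(v)\lesssim |v|^\gamma g(v)$ with a universal constant. The $K_0$-dependence arises instead in the error terms of $Q_1$, because the Carleman kernel $K_f(v,h)$ carries the weight $|w|^{\gamma+2s+1}$ and hence the bounds on $\cE_1,\cE_2,\cE_3$ all require the $(\gamma+2s)$-moment of $f$. Your final paragraph correctly anticipates the iteration that removes $K_0$ (the paper interpolates between the pointwise bound and the energy bound to get $K_0\lesssim K_0^{1-\epsilon}$), but the preliminary $K_0$-dependent estimate has to be produced from the $Q_1$ analysis, not from $Q_2$.
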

In particular, note that $C_q$ is independent of $K_0$ and the rate of decay in \eqref{e:velocitydecay}. 

The proof of Theorem \ref{t:main} also gives a universal bound on the constant $K_0$ in \eqref{e:Kbound}, which implies that $\int f(t,x,v) |v|^{\gamma+2s} \dd v \lesssim t^{-(\gamma+2s)/\gamma}$. By the De Giorgi estimate of \cite{imbert2016weak}, this implies $f$ is H\"older continuous, with (local) H\"older estimates depending only on universal constants. (See Remark 1.4 of \cite{imbert2016weak}.) We expect that the higher regularity argument of \cite{imbert2019smooth} would also extend (perhaps with some adjustments) to the case $\gamma + 2s>2$, via our decay estimates for $f$ and the universal bound on $K_0$.

\subsection{Related work}\label{s:related}

There is a long history of decay estimates for the Boltzmann equation, though mostly for the space homogeneous case (where $f$ is assumed to be independent of $x$): for estimates on $L^1_v$ moments, both polynomial and exponential, see \cite{ikenberry1956kinetic, elmroth1983boltzmann, desvillettes1993moments, wennberg1996povzner, bobylev1997moment, lu1999boltzmann, mischler2006cooling, alonso2013exponential} and the references therein. Pointwise polynomial decay (i.e. $L^\infty_v$ moments) were considered in \cite{carleman1933boltzmann, arkeryd1983boltzmann} for the homogeneous (cutoff) equation, and pointwise exponential decay was established in \cite{gamba2009upper}. The latter result was extended to the non-cutoff homogeneous equation in \cite{gamba2017exponential}. It should be noted that the conditional assumptions \eqref{e:hydrodynamicbounds} are not necessary in the space homogeneous case, because the mass $M_f(t)$ and energy $E_f(t)$ are conserved by the evolution of the equation, and the entropy $H_f(t)$ is nonincreasing.

Fewer decay results of this type are available for the inhomogeneous equation, but in \cite{gualdani2017htheorem}, generation of polynomial moments in $L^\infty_x L^1_v$ was established for the hard spheres collision kernel, under conditional assumptions similar to \eqref{e:hydrodynamicbounds}. 

As mentioned above, unconditional global existence of solutions to the non-cutoff Boltzmann equation with general initial data remains unknown. Global-in-time solutions have only been constructed in special cases such as the close-to-equilibrium \cite{gressman2011boltzmann} and the space homogeneous \cite{desvillettes2009stability} settings.

\subsection{Proof strategy}\label{s:ideas}

Our preliminary goal is to show for all $q>0$ sufficiently large that 
\begin{equation}\label{e:Cq}
f(t,x,v)\leq C_q(K_0)t^{-q/\gamma -d/2s}|v|^{-q},
\end{equation}
where $C_q(K_0)$ depends on $q$ and $K_0$ in an explicit way. (This is Theorem \ref{t:f-upper-bound}.) The proof of this estimate is based on the method of \cite{LuisMain}. Defining $g(t,v) = N_qt^{-q/\gamma - d/2s} |v|^{-q}$ for a constant $N_q$ to be determined, our decay assumption on $f$ implies that $f(t,x,v) < g(t,v)$ for small $t$ (see Section \ref{s:breakthrough}). Then we look at the first crossing point where $f(t,x,v) = g(t,v)$ and seek a contradiction. Since $g$ is independent of $x$, we have $\nabla_x f = 0$ and $\partial_t f \geq \partial_t g$ at this point. One then has
\begin{equation}
\partial_t g \leq \partial_t f  = Q(f,f).
\end{equation}
 It then suffices to show $Q(f,f)<\partial_t g$ at the crossing point to derive a contradiction. Using the Carleman representation (see Section \ref{s:prelim}), we can decompose $Q$ into a fractional diffusion operator $Q_1$ and a lower-order term $Q_2$. Further decomposing the integral $Q_1$ into a ``good'' term $\cG$ and four error terms, we will show that the diffusive part $\cG$ is negative and dominates the other parts of $Q$, leading to the desired inequality $Q(f,f) < \partial_t g$ at the crossing point.


To complete the proof of Theorem \ref{t:main}, we need to show that the quantity $K_0$ can be bounded by universal constants. Using the pointwise estimate \eqref{e:Cq} and our hydrodynamic bounds, we will show that 
\begin{equation}
t^{\frac{\gamma+2s}{\gamma}}\int\limits_{\R^d} f(t,x,v) |v|^{\gamma+2s} \dd v \leq CK_0^{1-\epsilon}.
\end{equation}
Taking the supremum in $(t,x)$, we then have that $K_0^\epsilon \leq C$ for some $C$ universal.  Hence, the pointwise bounds \eqref{e:Cq} will in fact only depend on our original hydrodynamic quantities. This last step is reminiscent of the method applied to the hard potentials case of the Landau equation by the second author \cite{StanMain}, where decay estimates were first established with constants depending on the $\gamma + 2$ moment of $f$, and then this dependence was removed by carefully interpolating between the pointwise decay estimates and the energy density bound.

\subsection{Organization of the paper} In Section \ref{s:prelim}, we review the Carleman representation of $Q(f,f)$ and the $L^\infty$ estimate of \cite{silvestre2016boltzmann} which we will need. In Section \ref{s:breakthrough}, we make precise the breakthrough argument described in Section \ref{s:ideas}.  In Section \ref{s:Q}, we prove the pointwise estimate \eqref{e:Cq} by estimating the collision operator $Q(f,f)$ at a crossing point. In Section \ref{s:proof}, we complete the proof of Theorem \ref{t:main} by showing the constant $K_0$ in \eqref{e:Kbound} is bounded universally.

%
%
%

\section{Preliminaries and known results}\label{s:prelim}

We will use so-called Carleman coordinates to write the bilinear collision operator $Q(f,f)$ as a sum of two terms,
\begin{equation}
\begin{split}
Q(f,f) = Q_1(f,f) + Q_2(f,f),
\end{split}
\end{equation}
where 
\begin{equation}\label{e:Q1Q2}
\begin{split}
Q_1(f,f) &:= \int\limits_{\R^d} \int\limits_{\mathbb S^{d-1}} (f(v') - f(v)) f(v_*') B(|v-v'|,\cos\theta) \dd \sigma \dd v_*,\\
Q_2(f,f) &:= f(v) \int\limits_{\R^d}\int\limits_{\mathbb S^{d-1}} (f(v_*') - f(v_*)) B(|v-v'|,\cos\theta) \dd \sigma \dd v_*.
\end{split}
\end{equation}
(We will routinely suppress the dependence of $f$ on $t$ and $x$ in calculations involving the collision operator, since this operator acts only in velocity space.) These integrals are certainly well-defined, since we are assuming $f$ is $C^2$ and has polynomial decay of high order in $v$. The following results, quoted from \cite{silvestre2016boltzmann}, will allow us to  treat $Q_1$ and $Q_2$ as a fractional diffusion operator and a lower-order term, respectively. To simplify notation, we will write $h = v-v'$ for the remainder of the paper.
\begin{lemma}{\cite[Lemma 4.1 and Corollary 4.2]{silvestre2016boltzmann}}
The term $Q_1(f,f)$ can be written
\begin{equation}
Q_1(f,f)(v) = \int\limits_{\R^d} (f(v+h)-f(v))K_f(v,h)\dd h,
\end{equation}
where the kernel $K_f(v,h)$ is defined by
\begin{equation}\label{e:Kf}
 K_f(v,h) = |h|^{-d-2s}\int\limits_{w\perp h} f(v+w)|w|^{\gamma + 2s+1}A(|h|, |w|) \dd w,
\end{equation}
and $A(|h|,|w|)\approx 1$.
\end{lemma}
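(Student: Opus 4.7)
The plan is to derive the representation via the classical Carleman change of variables. The idea is to replace the integration variables $(v_*, \sigma) \in \R^d \times \mathbb S^{d-1}$ with $(h, w)$, where $h := v' - v$ and $w := v_*' - v$, subject to the constraint $w \perp h$.

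The first key step is the geometric observation that $h$ and $w$ are always orthogonal along a collision. Direct expansion gives
\[
(v' - v) \cdot (v_*' - v) = \Big(\tfrac{v_* - v}{2} + \tfrac{|v - v_*|\sigma}{2}\Big) \cdot \Big(\tfrac{v_* - v}{2} - \tfrac{|v - v_*|\sigma}{2}\Big) = 0.
\]
Combined with momentum conservation $v + v_* = v' + v_*'$, which yields $v_* = v + h + w$, one obtains $|v - v_*|^2 = |h|^2 + |w|^2$, $|v - v_*|\sigma = h - w$, and hence $\sin^2(\theta/2) = |h|^2/(|h|^2 + |w|^2)$. Thus $(h, w)$, with $w$ restricted to the hyperplane $h^\perp$, parameterizes a $(2d-1)$-dim submanifold matching the dimension of $\R^d \times \mathbb S^{d-1}$.

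The next step is to compute the Jacobian of the map $(v_*, \sigma) \mapsto (h, w)$, a standard if somewhat delicate calculation. The result is that $\dd v_* \, \dd\sigma$ transforms to a multiple of $(|h|^2 + |w|^2)^{-\alpha} \dd h \, \dd w$ for an explicit dimensional exponent $\alpha$, with $\dd w$ the $(d-1)$-dim Lebesgue measure on $h^\perp$. Substituting into $Q_1(f, f)$, writing $v' = v + h$ and $v_*' = v + w$, and expanding $b(\cos\theta) \sim |\sin(\theta/2)|^{-(d-1)-2s} = |h|^{-(d-1)-2s}(|h|^2+|w|^2)^{((d-1)+2s)/2}$ in the new variables, I would group together all powers of $|h|$ and all powers of $|h|^2 + |w|^2$. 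The $|h|$-powers collapse to the prefactor $|h|^{-d-2s}$, which factors outside the inner integral. The remaining $(|h|^2 + |w|^2)$-powers, together with dimensional constants, get absorbed (along with the normalization against $|w|^{\gamma + 2s + 1}$) into the bounded residual $A(|h|, |w|)$.

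The main obstacle is the algebraic bookkeeping: verifying that the various powers contributed by $B = |h|^\gamma b(\cos\theta)$, the angular singularity, and the Jacobian indeed consolidate into the form $|h|^{-d-2s} |w|^{\gamma + 2s + 1} A(|h|, |w|)$ with $A \approx 1$ in the regime relevant to the paper's estimates. Since the applications only require $A$ bounded above and below by positive constants on that regime, the exact closed form of $A$ need not be written out; the bounds follow from the universal two-sided bounds on $b(\cos\theta)$ together with the comparability of the leftover $(|h|^2 + |w|^2)$-factors to $1$ in the relevant range.
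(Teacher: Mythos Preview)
The paper does not prove this lemma; it quotes the result from \cite{silvestre2016boltzmann} and then records the explicit formula
\[
A(|h|,|w|) = 2^{d-1}(|h|^2+|w|^2)^{(-d+2+\gamma)/2}\,|h|^{d+2s-1}\,|w|^{-(\gamma+2s+1)}\,b(\cos\theta),
\]
with $\cos(\theta/2)=|w|/\sqrt{|h|^2+|w|^2}$, that one extracts from that proof. Your sketch is precisely the Carleman change of variables carried out in the cited reference, so you are reproducing the intended argument rather than offering an alternative.

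One point deserves tightening. You hedge that $A\approx 1$ only ``in the regime relevant to the paper's estimates,'' whereas the paper uses $A\approx 1$ without restriction. If you substitute $b(\cos\theta)\approx|\sin(\theta/2)|^{-(d-1)-2s}$ and $\sin(\theta/2)=|h|/\sqrt{|h|^2+|w|^2}$ into the formula above, the powers collapse to
\[
A(|h|,|w|)\ \approx\ \left(\frac{|h|^2+|w|^2}{|w|^2}\right)^{(\gamma+2s+1)/2},
\]
which is \emph{not} comparable to $1$ unless $|h|\lesssim |w|$. This is exactly the constraint $\theta\in(0,\pi/2]$ coming from the standard symmetrization of the angular kernel (replacing $b(\cos\theta)$ by $b(\cos\theta)+b(\cos(\pi-\theta))$ on the half-sphere), which is implicit in the setup of $B$ but which your sketch never invokes. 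Without naming that step, your final sentence---that the leftover $(|h|^2+|w|^2)$-factors are comparable to $1$---is an assertion rather than a conclusion. Once you add the symmetrization, the bound $A\approx 1$ is global and your derivation is complete.
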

The statement of \cite[Corollary 4.2]{silvestre2016boltzmann} is slightly different:
\[ K_f(v,h) \approx |h|^{-d-2s} \int\limits_{w\perp h} f(v+w)|w|^{\gamma+2s+1} \dd w.\]
However, examining the proof shows that, in fact, \eqref{e:Kf} holds with
\[ A(|h|,|w|) = 2^{d-1}(|h|^2+|w|^2)^{(-d+2+\gamma)/2} |h|^{d+2s-1} |w|^{-(\gamma+2s+1)} b(\cos\theta) \approx 1,\]
where $\cos(\theta/2) = |w|/ \sqrt{|h|^2+|w|^2}$.

Next, for the lower order term $Q_2(f,f)$, we have
\begin{lemma}{\cite[Lemmas 5.1 and 5.2]{silvestre2016boltzmann}}
The term $Q_2(f,f)$ can be written
\begin{equation}
Q_2(f,f)(v) = f(v)\left(c_{d,s}\int\limits_{\R^d} f(v+w) |w|^{\gamma} \dd w \right) = f(v)(c_{d,s} f * |w|^\gamma)(v),
\end{equation}  
where $c_{d,s}>0$ depends only on $d$ and $s$.
\end{lemma}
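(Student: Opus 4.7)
The plan is to prove the cancellation lemma that converts the inner double integral of $Q_2(f,f)$ into a convolution against $|\cdot|^\gamma$. Writing
\[
I(v) := \int_{\R^d}\int_{\mathbb S^{d-1}} \bigl[f(v_*') - f(v_*)\bigr] B(|v-v'|,\cos\theta) \dd\sigma \dd v_*,
\]
I would, for each fixed $v$ and $\sigma$, perform the change of variables $v_* \mapsto z := v_*'$ in the first half of the bracket. Using the identity $|v-v_*'| = |v-v_*|\cos(\theta/2)$ together with the explicit formula $v_*' - v = (v_* - v)/2 - |v-v_*|\sigma/2$, the Jacobian can be computed directly; it behaves like $\cos(\theta/2)^{-(d+2)}$ up to a dimensional constant, and the original angle $\theta$ is reinterpreted as the angle between $v-z$ and $\sigma$.

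After this substitution, and after renaming $v_* \to z$ in the second half, both integrals are expressed over the same variable $z \in \R^d$, yielding
\[
I(v) = \int_{\R^d} f(z) \left[\int_{\mathbb S^{d-1}} \bigl(\tilde B(v-z,\cos\theta) - B(|v-z|\sin(\theta/2),\cos\theta)\bigr) \dd\sigma\right] \dd z,
\]
where $\tilde B$ denotes the transformed cross section (with the Jacobian absorbed). By the homogeneity $B(r,\cos\theta) = r^\gamma b(\cos\theta)$, both terms in the bracket scale like $|v-z|^\gamma$, so the bracketed angular integral equals $c_{d,s}\,|v-z|^\gamma$ for some constant depending only on $d$, $s$, and the fixed angular factor $b$. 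Substituting $w = z-v$ then produces exactly the claimed convolution $c_{d,s}\,(f * |\cdot|^\gamma)(v)$, and multiplying by $f(v)$ gives the lemma.

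The main obstacle is showing that the bracketed angular integral is \emph{finite}, despite $b(\cos\theta) \sim \sin(\theta/2)^{-(d-1)-2s}$ being nonintegrable near $\theta = 0$. The essential observation is that as $\theta \to 0$ one has $v_*' \to v_*$, so the change of variables degenerates to the identity and $\tilde B$ inherits precisely the same leading singularity as $B$, with matching coefficient to leading order. The difference $\tilde B - B$ is therefore integrable in $\sigma$, and a careful expansion near $\theta = 0$ shows $c_{d,s}$ is finite and positive. This is the content of the classical cancellation lemma, and the version stated here is the one established in \cite{silvestre2016boltzmann} for the specific parametrization used.
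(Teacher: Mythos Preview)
The paper does not provide its own proof of this lemma; it is quoted from \cite{silvestre2016boltzmann} as a known result in the preliminaries section, so there is no in-paper argument to compare against. Your sketch is essentially the standard cancellation-lemma computation (originating with Alexandre--Desvillettes--Villani--Wennberg and carried out in the cited reference for this parametrization), and the overall strategy is correct: change variables $v_*\mapsto v_*'$ in the gain part at fixed $\sigma$, combine with the loss part over a common integration variable, and use the homogeneity $B(r,\cos\theta)=r^\gamma b(\cos\theta)$ to factor out $|v-z|^\gamma$.

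One technical point worth tightening: after the substitution $z=v_*'$, the angle between $v-z$ and $\sigma$ is not $\theta$ but $\theta/2$, since $(v-v_*')\cdot\sigma = |v-v_*'|\cos(\theta/2)$; moreover the image of the map $v_*\mapsto v_*'$ (for fixed $v,\sigma$) only covers the half-space where $(v-z)\cdot\sigma\ge 0$. Consequently the transformed kernel $\tilde B$ must be written in terms of the new half-angle variable, and the bracketed angular integral lives on a hemisphere rather than the full sphere. The leading-singularity matching you invoke then occurs as this half-angle tends to zero, and the difference $\tilde B - B$ is indeed integrable there. With that correction your argument goes through and yields the finite positive constant $c_{d,s}$, exactly as in Lemmas~5.1--5.2 of the cited reference.
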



We will also need the main conclusion of \cite{silvestre2016boltzmann}, which is an \emph{a priori} bound for solutions in $L^\infty(\T^d\times \R^d)$ for each $t>0$:
\begin{proposition}{\cite[Theorem 1.2]{silvestre2016boltzmann}}\label{p:Linfty}
Let $f$ be a classical solution of \eqref{e:boltzmann} on $[0,1]\times \T^d\times \R^d$ that satisfies \eqref{e:hydrodynamicbounds}. Then there holds
	\[f(t,x,v)\leq N_0 t^{-d/2s},\]
	for some $N_0$ depending only on universal constants.
\end{proposition}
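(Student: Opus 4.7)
The plan is to prove the $L^\infty$ bound by a maximum-principle argument with a spatially homogeneous barrier, which is the strategy of Silvestre. Fix a large constant $N_0 > 0$ (to be determined by universal constants) and consider the barrier $\phi(t) := N_0 t^{-d/2s}$. The goal is to show that $f(t,x,v) \leq \phi(t)$ throughout $(0,1]\times \T^d \times \R^d$. Since $\phi(t) \to \infty$ as $t\to 0$ and a classical solution $f$ decays as $|v|\to\infty$, if the bound fails there is a first crossing time $t_0 \in (0,1]$ and a point $(x_0,v_0)$ with $f(t_0,x_0,v_0) = \phi(t_0) =: M$. Because $\phi$ depends on neither $x$ nor $v$, this crossing point is a global maximum of $f(t_0,\cdot,\cdot)$ on $\T^d \times \R^d$, so $\nabla_x f = 0$, $\nabla_v f = 0$, and $\partial_t f \geq \phi'(t_0) = -\tfrac{d}{2s}N_0 t_0^{-d/2s - 1}$. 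The Boltzmann equation then forces $Q(f,f)(t_0,x_0,v_0) \geq \phi'(t_0)$, and I will derive a contradiction by showing the reverse strict inequality for $N_0$ large enough.

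Decompose $Q = Q_1 + Q_2$ via the Carleman representation recalled in Section \ref{s:prelim}. For $Q_1$, use that $f(v_0+h) \leq M$ everywhere (global max) together with two ingredients. First, a lower bound of the form $K_f(v_0,h) \gtrsim |h|^{-d-2s}$ on a set of directions $h$ of positive measure; this bound uses the mass lower bound $m_0$ (to find mass in some direction) and the entropy bound $H_0$ (to preclude all the mass from clustering on a codimension-one slab perpendicular to every $h$), ensuring the inner integral $\int_{w\perp h} f(v_0+w)|w|^{\gamma+2s+1}\dd w$ is coercively bounded below. Second, the concentration estimate from $\int f \, \dd v \leq M_0$: the level set $\{f \geq M/2\}$ has measure at most $2M_0/M$, which is small when $M$ is large. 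These two estimates combine in the standard fractional-diffusion-at-a-maximum calculation to produce
\[ Q_1(f,f)(v_0) \leq -c_1 \, M^{1 + 2s/d},\]
for a universal $c_1 > 0$.

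For the lower-order term, the cancellation lemma gives $Q_2(f,f)(v_0) = c_{d,s} f(v_0)(f \ast |\cdot|^\gamma)(v_0)$, and bounding the convolution by the mass and energy densities yields $Q_2(f,f)(v_0) \leq C_2 M(1+|v_0|^\gamma)$. Writing $\phi'(t_0) = -\tfrac{d}{2s}N_0^{-2s/d} M^{1+2s/d}$, the desired strict inequality $Q_1 + Q_2 < \phi'(t_0)$ becomes $c_1 > \tfrac{d}{2s}N_0^{-2s/d} + C_2(1+|v_0|^\gamma) M^{-2s/d}$. If $|v_0|$ is bounded, both terms on the right are small for $N_0$ (hence $M$) large, producing the contradiction.

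The main obstacle is the regime $|v_0| \to \infty$, where the crude bound on $Q_2$ grows polynomially while the naive bound on $Q_1$ does not. The remedy is to sharpen the lower bound on $K_f(v_0,h)$ so that it itself picks up a growing factor in $|v_0|$, essentially of order $|v_0|^\gamma$: one searches for mass of $f$ not near the origin but in a shell centered at $v_0$ whose width is controlled by the energy and entropy bounds, which is exactly where the weight $|w|^{\gamma+2s+1}$ in \eqref{e:Kf} becomes large when $v_0+w$ is near the support of the mass. After this refinement, the $|v_0|^\gamma$ factor in the improved estimate $Q_1 \leq -c_1 M^{1+2s/d}(1+|v_0|^\gamma)$ absorbs the growth of $Q_2$, and the barrier argument closes uniformly in $v_0$.
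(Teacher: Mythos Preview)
The paper does not give a proof of this proposition at all: it is quoted verbatim as a known result from \cite{silvestre2016boltzmann} (with a pointer to \cite{LuisMain} for the explicit time dependence), and is used as a black box in the proof of Theorem~\ref{t:f-upper-bound}. So there is no ``paper's own proof'' to compare against.

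That said, your sketch accurately reconstructs Silvestre's original argument. The barrier $\phi(t)=N_0 t^{-d/2s}$, the first-crossing reduction to $Q(f,f)<\phi'(t_0)$ at a global maximum, the Carleman split $Q=Q_1+Q_2$, the coercive negativity of $Q_1$ at a maximum coming from (i) a cone-of-nondegeneracy lower bound $K_f(v_0,h)\gtrsim |h|^{-d-2s}$ (which uses the mass lower bound together with the energy \emph{and} entropy upper bounds---you omitted the energy bound, which is what confines the mass to a bounded region and makes the cone argument work), and (ii) the Chebyshev bound $|\{f\geq M/2\}|\leq 2M_0/M$, yielding the scaling $Q_1\lesssim -M^{1+2s/d}$; and finally the observation that for large $|v_0|$ the kernel lower bound itself gains a growing weight in $|v_0|$ (because the mass sits near the origin, so $|w|\approx |v_0|$ in \eqref{e:Kf}) which absorbs the $(1+|v_0|^\gamma)$ growth of $Q_2$---all of this is exactly the structure of the proof in \cite{silvestre2016boltzmann}. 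Your outline is correct and there is nothing substantively different to compare.
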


See also \cite[Theorem 4.1]{LuisMain} for another statement of this result, with the time dependence of the right-hand side given explicitly.

\section{Breakthrough Argument}\label{s:breakthrough}

Fix some $q>d+\gamma+2s+1$, which we will later take to be sufficiently large, and a classical solution $f:[0,1]\times \T^d\times \R^d \to [0,\infty)$ to \eqref{e:boltzmann} satisfying hydrodynamic bounds \eqref{e:hydrodynamicbounds}, finite $\gamma +2s$ moment \eqref{e:Kbound}, and uniform polynomial decay as $|v|\to \infty$ \eqref{e:velocitydecay}.  That is, 
\begin{equation}
\lim\limits_{|v|\to \infty} |v|^qf(t,x,v) = 0 \qquad \text{uniformly in } (t,x)\in [0,1]\times \T^d.
\end{equation}

Let $g(t,v) = N_qt^{-q/\gamma - d/2s} |v|^{-q}$.  Then for $t>0$ sufficiently small, we know by \eqref{e:velocitydecay} that $f(t,x,v)<g(t,v)$.  As discussed above, our goal is to show that $f(t,x,v)\leq g(t,v)$ for some $N_q$ depending only on $K_0$ and universal constants.  

Suppose not.  Then since $f$ is periodic in $x$ and decays in $v$, there must be a first crossing point.   That is, there is some $(t,x,v)\in (0,1]\times \T^d\times \R^d$ such that 
\begin{equation}\label{e:touchabove}
\left\{ \begin{array}{ll}
f(t,x,v) \ =g(t,v), &  \\ f(s,y,w)\leq g(s,w), & \forall (s,y,w)\in (0,t]\times \T^d\times \R^d \end{array}\right.
\end{equation}
In particular, we have that $\nabla_xf(t,x,v) = \nabla_x g(t,v) = 0$ and that 
\begin{equation}\label{e:contradiction}
\partial_t g(t,v)\leq \partial_t f(t,x,v) = Q(f,f)(t,x,v).
\end{equation}

Our goal then is to show that under the assumptions of \eqref{e:touchabove} that in fact
\begin{equation}\label{e:Qlessg}
Q(f,f)(t,x,v)<\partial_t g(t,v),
\end{equation}
contradicting \eqref{e:contradiction}.

\section{Estimating $Q(f,f)$}\label{s:Q}

To show that \eqref{e:Qlessg} holds under the assumptions \eqref{e:touchabove}, we need to analyze the collision kernel $Q(f,f)$ in detail. Starting with the Carleman representation \eqref{e:Q1Q2}, we see that the term $Q_1$ is a nonlinear, nonlocal diffusive term of order $2s$, while $Q_2$ is a lower order convolution term.  Since $f$ is being touched from above by $g$, we should expect $Q_1(f,f)(t,x,v) < 0$.  We should be able to control the size of $Q_2$ as it is lower order, so that $Q(f,f)(t,x,v)<0$ on the whole.  So, we begin by investigating the diffusive term.  

We have that 
\begin{equation}
\begin{split}
Q_1(f,f)(v) &= \int\limits_{\R^d}  (f(v+h)-f(v)) |h|^{-d-2s} \int\limits_{w\perp h} f(v+w)|w|^{\gamma + 2s+1}A(|h|, |w|) \dd w \dd h
\\& =  \int\limits_{\R^d}   f(v+w)|w|^{\gamma + 2s} \int\limits_{h\perp w}(f(v+h)-f(v)) |h|^{-d+1-2s}A(|h|, |w|) \dd h \dd w
\\& = \cG(v) + \cE_1(v) + \cE_2(v) + \cE_3(v) + \cE_4(v)
\end{split}
\end{equation}
where 
\begin{equation}\label{e:goodtermdefn}
\cG(v) = \int\limits_{|v+w|\leq |v|/\sqrt{2q+4}} f(v+w)|w|^{\gamma+2s} \int\limits_{h\perp w} (f(v+h)-f(v)) A(|h|,|w|)|h|^{-d+1-2s} \dd h \dd w,
\end{equation}
\begin{equation}\label{e:badterm1defn}
\cE_1(v) = \int\limits_{|v+w|\geq |v|/\sqrt{2q+4}} f(v+w)|w|^{\gamma+2s} \int\limits_{h\perp w,\, |h|\leq |v|/q} (f(v+h)-f(v)) A(|h|,|w|)|h|^{-d+1-2s} \dd h \dd w
\end{equation}
\begin{equation}\label{e:badterm2defn}
\cE_2(v) = \int\limits_{|v|/\sqrt{2}\geq |v+w|\geq |v|/\sqrt{2q+4}} f(v+w)|w|^{\gamma+2s} \int\limits_{h\perp w, \, |h|\geq |v|/q} (f(v+h)-f(v)) A(|h|,|w|)|h|^{-d+1-2s} \dd h \dd w
\end{equation}
\begin{equation}\label{e:badterm3defn}
\cE_3(v) = \int\limits_{ |h|\geq |v|/q, \, |v+h|>|v|/q} (f(v+h)-f(v)) |h|^{-d-2s}\int\limits_{w\perp h, \, |v+w|\geq |v|/\sqrt{2}} f(v+w)|w|^{\gamma+2s+1}  A(|h|,|w|)\dd w \dd h
\end{equation}
\begin{equation}\label{e:badterm4defn}
\cE_4(v) = \int\limits_{ |v+h|\leq |v|/q} (f(v+h)-f(v)) |h|^{-d-2s}\int\limits_{w\perp h, \, |v+w|\geq |v|/\sqrt{2}} f(v+w)|w|^{\gamma+2s+1}  A(|h|,|w|) \dd w \dd h
\end{equation}

The term $\cG(v)$ is the one good term, the source of all the diffusive behavior of $Q_1$.  We are able to estimate it using the concavity of $g$ in the nonradial directions.  The other $\cE_i(v)$ are error terms, which we need to bound from above.  They are split according to how we will be bounding each of them.  $\cE_1(v)$ is a local error term, which we will bound with the local regularity of $g$ near $v$.  $\cE_2(v)$ is the low growth error term.  We will be be able to bound it because over the range of $h$ that we integrate over, $g(v+h)$ will be proportional to $g(v)$.  $\cE_3(v)$ is the low mass error term.  We will bound the decay in the $\gamma+2s$ moment using $g$.  And finally, $\cE_4(v)$ is the most delicate error term, as it will have the same decay in $|v|$ as the good term $\cG(v)$.  But keeping careful track of the dependence on $q$ will ensure that $\cE_4$ is controlled by $\cG(v)$ for $q$ large.

\begin{remark}
A similar decomposition can be made with the Landau equation.  Since the equation is now of order 2 and ``local", only the good term $\cG$ and the local error term $\cE_1$ would remain.  Without these other error terms, one can improve the upper bound on $N_q$ to $\sim q^{q/2}$, which would imply the Gaussian decay proven in \cite{LandauUs}, \cite{StanMain}.
\end{remark}

\subsection{Bounds on Good Term $\cG(v)$}

Our first goal is to bound $\cG(v)$.  This will be the one and only negative term, and the source of all the diffusion.  

\begin{lemma}\label{l:Gbound}
For $f\geq 0$ satisfying the bounds \eqref{e:hydrodynamicbounds}, and $q > d+\gamma+2s+1$. If $(t,x,v)$ is a breakthrough point as in \eqref{e:touchabove}, and $|v|\geq R_q \approx q^{1/2}$, then
\[ \cG(v) \lesssim - q^s |v|^\gamma g(v),\]
where $g(t,x,v) = N_q t^{-q/\gamma-d/2s}|v|^{-q}$, and $\cG$ is defined by \eqref{e:goodtermdefn}.
\end{lemma}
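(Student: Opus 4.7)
The plan is to exploit the touching-from-above condition to replace $f$ by $g$ inside the inner integral, and then use the pointwise concavity of $|v|^{-q}$ in directions transverse to $v/|v|$ to extract a negative quadratic contribution via Taylor expansion.

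First, since $f(v+h)\le g(v+h)$ and $f(v)=g(v)$ at the breakthrough point, and since the region $\{h\perp w\}$ is symmetric under $h\mapsto -h$ with the kernel $A(|h|,|w|)|h|^{-d+1-2s}$ depending only on $|h|$ and $|w|$, I would symmetrize the inner integral to obtain
\begin{equation*}
\int_{h\perp w}(f(v+h)-f(v))\,A(|h|,|w|)|h|^{-d+1-2s}\,dh \leq \int_{h\perp w}\Big(\tfrac{1}{2}(g(v+h)+g(v-h)) - g(v)\Big)\,A(|h|,|w|)|h|^{-d+1-2s}\,dh.
\end{equation*}
This is the crucial place where touching-from-above enters: the odd-in-$h$ component of $g(v+h)-g(v)$ cannot be discarded without first replacing $f$ by $g$.

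Next, I would Taylor expand $F(r):=r^{-q/2}$, so that $g(v)=N_q t^{-q/\gamma-d/2s}F(|v|^2)$. With $|v\pm h|^2 = |v|^2 \pm 2h\cdot v + |h|^2$, the second-order expansion gives
\begin{equation*}
F(|v+h|^2)+F(|v-h|^2) - 2F(|v|^2) = -q|v|^{-q-2}|h|^2 + R(h),
\end{equation*}
where the linear-in-$h$ piece cancels by symmetry and the remainder satisfies $|R(h)|\lesssim q^2 |v|^{-q-4}\bigl((h\cdot v)^2 + |h|^4\bigr)$ as long as $|h|$ is small compared to $|v|$ so that $F''$ remains comparable to $F''(|v|^2)$. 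The constraint $|v+w|\le |v|/\sqrt{2q+4}$ means $w=-v+\xi$ with $|\xi|\le |v|/\sqrt{2q+4}$, so $h\perp w$ forces $|h\cdot v|=|h\cdot\xi|\le |h||v|/\sqrt{2q+4}$; this gives $q^2|v|^{-q-4}(h\cdot v)^2\le (q^2/(2q+4))|v|^{-q-2}|h|^2 \le (q/2)|v|^{-q-2}|h|^2$. Truncating at $|h|\le c|v|/\sqrt{q}$ for a universal $c$ small enough, the $|h|^4$ contribution is bounded by $c^2 q|v|^{-q-2}|h|^2$ and hence absorbed into the leading negative term, yielding
\begin{equation*}
\tfrac{1}{2}(g(v+h)+g(v-h)) - g(v) \le -c\,q\, N_q t^{-q/\gamma - d/2s}|v|^{-q-2}|h|^2\quad \text{for}\ |h|\le c|v|/\sqrt{q}.
\end{equation*}
For $|h|\ge 2|v|/\sqrt{2q+4}$ one checks directly that $|v\pm h|\ge |v|$ (since $|h|^2$ dominates the cross term), so the symmetric integrand is $\le 0$ there; that region is simply discarded when seeking an upper bound.

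Integrating the quadratic bound in the $(d-1)$-dimensional plane $h\perp w$ gives
\begin{equation*}
\int_0^{c|v|/\sqrt{q}}|h|^{2}\cdot|h|^{-d+1-2s}\cdot|h|^{d-2}\,d|h| \approx \bigl(|v|/\sqrt{q}\bigr)^{2-2s},
\end{equation*}
so the inner integral is bounded by $-c\,q^s N_q t^{-q/\gamma-d/2s}|v|^{-q-2s}$, with $A\approx 1$ absorbed into universal constants. Multiplying by $|w|^{\gamma+2s}\approx |v|^{\gamma+2s}$ (valid on the $w$-region since $|w|\in[|v|(1-1/\sqrt{2q+4}),|v|(1+1/\sqrt{2q+4})]$), it remains to integrate $f(v+w)$ over $|v+w|\le |v|/\sqrt{2q+4}$. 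By the mass lower bound combined with Chebyshev applied to the energy bound,
\begin{equation*}
\int_{|z|\le |v|/\sqrt{2q+4}}f(t,x,z)\,dz \ge m_0 - \frac{E_0(2q+4)}{|v|^2} \ge \frac{m_0}{2}
\quad\text{provided}\ |v|\ge R_q:=\sqrt{\tfrac{2E_0(2q+4)}{m_0}}\approx q^{1/2}.
\end{equation*}
Combining all of the above yields $\cG(v)\lesssim -q^s|v|^\gamma g(v)$, as claimed.

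The main technical obstacle is the balance in the Taylor expansion: the second derivative of $F(r)=r^{-q/2}$ carries a coefficient of order $q^2$, so the positive remainder threatens to overwhelm the negative leading term, which is only of order $q$. Both the outer constraint $|v+w|\le|v|/\sqrt{2q+4}$ (giving the $|h\cdot v|$ bound with a $1/q$ saving) and the truncation $|h|\lesssim|v|/\sqrt q$ (controlling the $|h|^4$ contribution) are precisely what is needed for the quadratic to win; the resulting gain factor $q^s$ comes from raising the truncation radius $|v|/\sqrt{q}$ to the power $2-2s$.
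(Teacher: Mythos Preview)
Your approach mirrors the paper's almost exactly: symmetrize, bound $f$ by $g$, Taylor expand $r\mapsto r^{-q/2}$, exploit the angular constraint $|h\cdot v|\le |h||v|/\sqrt{2q+4}$ forced by $|v+w|\le |v|/\sqrt{2q+4}$, integrate the quadratic, and close with the mass--energy lower bound. There is, however, one genuine gap.

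You establish the coercive estimate $\tfrac12(g(v+h)+g(v-h))-g(v)\le -cq|v|^{-q-2}|h|^2\,N_q t^{-q/\gamma-d/2s}$ only for $|h|\le c|v|/\sqrt q$ with $c$ small, and the nonpositivity $|v\pm h|\ge |v|$ only for $|h|\ge 2|v|/\sqrt{2q+4}$. For $q>2$ one has $2/\sqrt{2q+4}>1/\sqrt q$, so there is an intermediate annulus $c|v|/\sqrt q<|h|<2|v|/\sqrt{2q+4}$ on which you say nothing about the sign of the symmetric second difference. This is not harmless: your own remainder bound gives only $|R(h)|\lesssim q^2|v|^{-q-4}|h|^4$ there (the $(h\cdot v)^2$ part is absorbed by the angular constraint), and at $|h|\approx |v|/\sqrt q$ this is $\approx |v|^{-q}$, which is exactly the size of the negative leading term $q|v|^{-q-2}|h|^2$. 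After integrating in $h$, the possible positive contribution from the annulus is of order $q^s|v|^{-q-2s}$, the same as the good negative term; so without a sign you cannot conclude.

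The paper closes this gap with one extra observation: writing $u(s)=(1+s)^{-q/2}$, one has $u'''(s)<0$ for $s>-1$, so the second-order Taylor polynomial of $u$ is an \emph{upper} bound. Concretely, with $r=|h|$ and $\theta=h\cdot v/(|h||v|)$,
\[
|v+h|^{-q}+|v-h|^{-q}-2|v|^{-q}\le |v|^{-q}\Bigl[-q\tfrac{r^2}{|v|^2}+q(q+2)\bigl(\theta^2\tfrac{r^2}{|v|^2}+\tfrac{r^4}{4|v|^4}\bigr)\Bigr],
\]
valid for all $r$ with $\pm 2\theta r/|v|+r^2/|v|^2>-1$. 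Combined with $\theta^2\le 1/(2q+4)$ and (in the nontrivial case) $r\le 2|\theta||v|$, the bracket is $\le 0$ for every $h$ in the cone, not just for $|h|$ outside an annulus. Once nonpositivity holds everywhere, your coercive estimate on the inner ball $|h|\le c|v|/\sqrt q$ suffices, and the rest of your argument goes through verbatim.
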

\begin{proof}

Recall that 
\begin{equation}
\cG(v) = \int\limits_{|v+w|\leq |v|/\sqrt{2q+4}} f(v+w)|w|^{\gamma+2s} \int\limits_{h\perp w} (f(v+h)-f(v)) A(|h|,|w|)|h|^{-d+1-2s} \dd h \dd w.
\end{equation}
We first want to bound the inner integral in $h$,
\begin{equation}
 \int\limits_{h\perp w} (f(v+h)-f(v)) A(|h|,|w|)|h|^{-d+1-2s} \dd h,
\end{equation}
from above.  
Note that as the above integral is symmetric in $h$, it suffices to bound 
\begin{equation}
 \int\limits_{h\perp w} (f(v+h)+f(v-h)-2f(v)) A(|h|,|w|)|h|^{-d+1-2s} \dd h.
\end{equation}
Since $g$ touches $f$ from above at $v$, we have that 
\[f(v+h)+f(v-h)-2f(v) \leq g(v+h)+g(v-h) -2g(v).\]

Next, we use the fact that, since $|v+w|$ is small, $w \sim -v$.  Thus, for any $h\perp w$, we should have $h\cdot v \sim 0$.  Explicitly, if $w = z-v$ for some $z\in B_{|v|/\sqrt{2q+4}}$, then $$h\cdot w = 0 \quad\Rightarrow  \quad |h\cdot v| = |h\cdot z| \leq |h|\frac{|v|}{\sqrt{2q+4}}.$$

We claim that 
\begin{equation}\label{e:g-leq-0}
g(v+h)+g(v-h)-2g(v)\leq 0, \qquad \text{for } |h\cdot v|\leq \frac{ |h| |v| }{\sqrt{2q+4}}.
\end{equation}
Recall $g(t,v) = N_q t^{-q/\gamma - d/2s} |v|^{-q}$. 
Let $r = |h|$ and $\theta = \displaystyle\frac{h\cdot v}{|h| \ |v|}$. If $r \geq 2|v||\theta|$, we have $|v\pm h|^2 -|v|^2 = |h|(|h|\pm 2\theta |v|) \geq 0$, and \eqref{e:g-leq-0} follows.

 If $r \leq 2|v||\theta|$, then doing a Taylor expansion of $u(s) = (1+s)^{-q/2}$, we see that
\begin{equation}\label{e:taylorexpansion}
\begin{split}
|v+h|^{-q}+|v-h|^{-q} - 2|v|^{-q} &= |v|^{-q}\left[\left(1+2\theta \frac{r}{|v|} + \frac{r^2}{|v|^2}\right)^{-q/2} + \left(1-2\theta \frac{r}{|v|} + \frac{r^2}{|v|^2}\right)^{-q/2}-2\right]
\\& = |v|^{-q}\left[ -q \frac{r^2}{|v|^2} + q(q+2)\left(\theta^2 \frac{r^2}{|v|^2}+\frac {r^4}{4|v|^4}\right) + \mathcal R \right],
          \end{split}
\end{equation}
%
where the remainder term $\mathcal R< 0$ because $u'''(s) <0$ for $s>-1$. (Note that $\pm 2\theta r/|v| + r^2/|v|^2 \geq -4\theta^2 > -1$.) Since $r^2/|v|^2 \leq 4\theta^2$ and $2\theta^2\leq 1/(q+2)$, we have
\[ - q \frac {r^2}{|v|^2} + q(q+2)\left(\theta^2 \frac {r^2}{|v|^2}  + \frac {r^4}{4|v|^4}\right) \leq q \frac {r^2}{|v|^2} \left( -1 + 2(q+2)\theta^2\right) \leq 0,\]
and \eqref{e:g-leq-0} holds in this case as well.

In fact, for certain $r$ and $\theta$, the quantity $g(v+h) - g(v-h) - 2g(v)$ has a coercive (negative) upper bound. More precisely, the expansion \eqref{e:taylorexpansion} shows
\[|v+h|^{-q} + |v-h|^{-q} - 2|v|^{-q} \leq \left(\frac {2-q}{2q}\right) |v|^{-q} q \frac{r^2}{|v|^2}, \qquad \text{ for } \theta^2 \leq \frac{1}{4q}, \quad 0<r \leq \frac{|v|}{2\sqrt{q}}.\]
Since $q> d+1\geq 3$, this implies
%
\begin{equation}\label{e:g-lowerbound}
g(v+h)+g(v-h)-2g(v)\lesssim  -g(v)q\frac{r^2}{|v|^2}, \qquad \text{ for } (h\cdot v)^2 \leq \frac{|v|^2 |h|^2}{4q}, \quad 0< |h| \leq \frac{|v|}{2\sqrt{q}}.
\end{equation}
By \eqref{e:g-leq-0} and \eqref{e:g-lowerbound}, we have that 
\begin{equation}
\int\limits_{h\perp w} (f(v+h)-f(v)) A(|h|,|w|)|h|^{-d+1-2s} \dd h\leq 0, \qquad \text{for } |v+w|\leq \frac{|v|}{\sqrt{2q+4}},
\end{equation}
with 
\begin{equation}
\begin{split}
\int\limits_{h\perp w} (f(v+h)-f(v)) A(|h|,|w|)|h|^{-d+1-2s} \dd h&\lesssim -\frac{q}{|v|^2}g(v)\int\limits_0^{|v|/2\sqrt{q}} r^{1-2s}\dd r 
\\& \lesssim -q^{s}|v|^{-2s}g(v), 
\end{split}
\end{equation}
whenever $|v+w|\leq \displaystyle\frac{|v|}{2\sqrt{q}}$.  
Thus 
\begin{equation}
\begin{split}
\cG(v)& \lesssim -q^s|v|^{-2s}g(v)\int\limits_{|v+w|\leq |v|/2\sqrt{q}} f(v+w)|w|^{\gamma+2s}\dd w 
\\&\lesssim -q^s |v|^\gamma g(v)\int\limits_{B_{|v|/(2\sqrt{q})}} f(z)\dd z.
\end{split}
\end{equation}

Now we use that, because of the energy bound in \eqref{e:hydrodynamicbounds}, most of the mass of $f$ lies within $B_{|v|/(2\sqrt{q})}$ for $|v|$ large enough. If $|v| \geq R_q :=  \displaystyle\sqrt{\frac{8qE_0}{m_0}}$, then 
\begin{equation}
\int\limits_{\R^d\setminus B_{|v|/(2\sqrt{q})}} f(z)\dd z \leq \frac{4q}{|v|^2}\int\limits_{\R^d} f(z)|z|^2 \dd z \leq \frac{m_0}{2}.
\end{equation}
Hence 
\begin{equation}
\int\limits_{B_{|v|/(2\sqrt{q})}} f(z)\dd z  = \int\limits_{\R^d}f(z)\dd z-\int\limits_{\R^d\setminus B_{|v|/(2\sqrt{q})}} f(z)\dd z \leq \frac{m_0}{2},
\end{equation}
so 
\begin{equation}\label{e:Gbound}
\cG(v) \lesssim -q^s|v|^\gamma g(v),
\end{equation}
as desired.
\end{proof}

\subsection{Bounds on local error term $\cE_1(v)$}
\smallskip

The first error term we will bound is $\cE_1(v)$, the local error term.

\begin{lemma}\label{l:E1bound}
With $f$, $g$, and $q$ as in Lemma \ref{l:Gbound}, and $(t,x,v)$ a breakthrough point with $|v|\geq 1$, we have
\[ \cE_1(v) \lesssim K_0t^{-\frac{\gamma+2s}{\gamma}}q^{\gamma/2 + 3s} |v|^{-2s}g(v),\]
where $K_0$ is the constant from \eqref{e:Kbound}, and $\cE_1(v)$ is defined in \eqref{e:badterm1defn}.
\end{lemma}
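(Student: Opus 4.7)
The plan is to follow the same symmetrization trick used in Lemma \ref{l:Gbound} on the inner integral, then use a Taylor expansion to obtain an upper bound by second derivatives of $g$, and finally use \eqref{e:Kbound} to control the outer integral.

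First, because the inner integral over $\{h\perp w, |h|\leq |v|/q\}$ is symmetric in $h$, I rewrite it as
\[\frac{1}{2}\int\limits_{h\perp w,\,|h|\leq |v|/q} \bigl(f(v+h)+f(v-h)-2f(v)\bigr) A(|h|,|w|)|h|^{-d+1-2s}\dd h.\]
Since $g$ touches $f$ from above at $v$ by \eqref{e:touchabove}, this quantity is bounded by the same expression with $f$ replaced by $g$. Next, applying the Taylor expansion \eqref{e:taylorexpansion} (valid since $|h|\leq |v|/q\leq |v|/2$, so we are well within the radius of convergence), the sum of the two second-order terms is dominated by $q(q+2)(\theta^2 + r^2/(4|v|^2))r^2/|v|^2 \lesssim q^2 |h|^2/|v|^2$, and the remainder is negative. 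Hence
\[g(v+h)+g(v-h)-2g(v)\lesssim q^2\,g(v)\,\frac{|h|^2}{|v|^2}.\]

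Now I integrate on the hyperplane $h\perp w$ using polar coordinates (with the $(d-2)$-dimensional surface element $r^{d-2}\dd r\dd\sigma$) and the bound $A\lesssim 1$. This yields
\[\int\limits_{h\perp w,\,|h|\leq |v|/q}\bigl(g(v+h)+g(v-h)-2g(v)\bigr) A(|h|,|w|)|h|^{-d+1-2s}\dd h\lesssim \frac{q^2 g(v)}{|v|^2}\int_0^{|v|/q}r^{1-2s}\dd r\lesssim q^{2s}g(v)|v|^{-2s}.\]
Notice that this bound is independent of $w$.

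It remains to handle the outer integral. Changing variables to $z=v+w$, the constraint $|v+w|\geq |v|/\sqrt{2q+4}$ becomes $|z|\geq |v|/\sqrt{2q+4}$, which gives $|v|\leq \sqrt{2q+4}\,|z|$, and therefore
\[|w|=|z-v|\leq |z|+|v|\lesssim \sqrt{q}\,|z|,\qquad\text{so}\qquad |w|^{\gamma+2s}\lesssim q^{(\gamma+2s)/2}|z|^{\gamma+2s}.\]
Applying \eqref{e:Kbound} then gives
\[\int\limits_{|v+w|\geq |v|/\sqrt{2q+4}} f(v+w)|w|^{\gamma+2s}\dd w\lesssim q^{(\gamma+2s)/2}\int_{\R^d} f(z)|z|^{\gamma+2s}\dd z\lesssim q^{(\gamma+2s)/2}K_0\,t^{-(\gamma+2s)/\gamma}.\]
Multiplying the two pieces together produces the stated power $q^{2s+(\gamma+2s)/2}=q^{\gamma/2+3s}$, which is exactly the claimed bound.

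I do not anticipate any serious obstacle: the key technical point is confirming that the symmetric second-difference of $g$ admits the upper bound $\lesssim q^2 g(v)|h|^2/|v|^2$ on the range $|h|\leq |v|/q$, which is a direct consequence of the calculation already done in \eqref{e:taylorexpansion}. The only other place one must be careful is the comparison $|w|\lesssim\sqrt{q}|z|$, which depends crucially on the region of integration defining $\cE_1$.
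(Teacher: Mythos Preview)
Your proof is correct and follows essentially the same approach as the paper: symmetrize the inner $h$-integral, bound the second difference of $g$ by $\lesssim q^2 g(v)|h|^2/|v|^2$ via the expansion \eqref{e:taylorexpansion}, integrate radially to get $q^{2s}|v|^{-2s}g(v)$, and then control the outer $w$-integral using $|w|\leq (1+\sqrt{2q+4})|z|$ together with \eqref{e:Kbound}. The paper records the constant as $(1+\sqrt{2q+4})^{\gamma+2s}$ before simplifying, while you write $q^{(\gamma+2s)/2}$ directly, but the content is identical.
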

\begin{proof}
From the Taylor expansion \eqref{e:taylorexpansion}, we have that 
\begin{equation}
|v+h|^{-q}+|v-h|^{-q} - 2|v|^{-q} \lesssim |v|^{-q}q^2\frac{|h|^2}{|v|^2}, \qquad \text{ for } 0\leq |h|\leq \frac{|v|}{q}.
\end{equation}
Hence, using again that $g$ touches $f$ from above at $v$, we can bound the inner integral in $h$ as 
\begin{equation}
\begin{split}
\int\limits_{h\perp w \ |h|\leq |v|/q} (f(v+h)+f(v-h)-2f(v)) A(|h|,|w|)|h|^{-d+1-2s} \dd h &\lesssim  q^2 |v|^{-2}g(v)\int\limits_{0}^{|v|/q} r^{1-2s}\dd r 
 \\& \lesssim q^{2s}|v|^{-2s}g(v).
\end{split}
\end{equation}
Hence, 
\begin{equation}\label{e:E1bound}
\begin{split}
\cE_1(v) & \lesssim q^{2s}|v|^{-2s} g(v)\int\limits_{|v+w|\geq |v|/\sqrt{2q+4}} f(v+w)|w|^{\gamma+2s} \dd w
\\& \lesssim q^{2s} |v|^{-2s} g(v)(1+\sqrt{2q+4})^{\gamma+2s}\int\limits_{\R^d\setminus B_{|v|/\sqrt{2q+4}}}f(z)|z|^{\gamma+2s}\dd z 
\\& \lesssim K_0t^{-\frac{\gamma+2s}{\gamma}}q^{\gamma/2 + 3s} |v|^{-2s}g(v),
\end{split}
\end{equation}
 where we have used $|v|\leq \sqrt{2q+4}|z|$ in the second line and \eqref{e:Kbound} in the third line.
\end{proof}

\subsection{Bounds on low growth error term $\cE_2(v)$} 
\smallskip


\begin{lemma}\label{l:E2bound}
With $f$, $g$, and $q$ as in Lemma \ref{l:E1bound}, $K_0$ as in \eqref{e:Kbound}, and $(t,x,v)$ a breakthrough point with $|v|\geq 1$, we have
\[\cE_2(v) \lesssim K_0 t^{-\frac{\gamma+2s}{\gamma}} 2^q |v|^{-2s} g(v),\]
where $\cE_2(v)$ is defined by \eqref{e:badterm2defn}.
\end{lemma}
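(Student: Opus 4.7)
The plan is to exploit the geometric meaning of the constraint $|v+w|\leq |v|/\sqrt{2}$ built into the definition of $\cE_2$: on this region, $|v+h|$ will stay comparable to $|v|$ for every $h\perp w$ in the inner integral, forcing $g(v+h)\leq 2^{q/2}g(v)$ and accounting for the $2^q$ factor in the claimed estimate.

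First I would replace the inner integrand by the crude, sign-correct bound $f(v+h)-f(v)\leq g(v+h)$, valid because $g$ touches $f$ from above at $v$ and $f(v)\geq 0$. The work then reduces to showing $g(v+h)\leq 2^{q/2}g(v)$ on the inner integration region, for which I would prove the geometric lemma $|v+h|\geq |v|/\sqrt 2$ whenever $h\perp w$ and $|v+w|\leq |v|/\sqrt 2$. Let $v_\perp$ denote the orthogonal projection of $v$ onto $\{w\}^\perp$. Since $w$ lies in the kernel of this projection, $v_\perp$ is also the projection of $v+w$; hence $|v_\perp|\leq |v+w|\leq |v|/\sqrt 2$. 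For any $h\perp w$ we have $v\cdot h=v_\perp\cdot h$, so completing the square yields
\[ |v+h|^2=|v|^2+2v\cdot h+|h|^2\geq |v|^2-2|v+w||h|+|h|^2\geq |v|^2-|v+w|^2\geq \tfrac{1}{2}|v|^2.\]

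With $A\approx 1$ and the pointwise bound on $g(v+h)$ in hand, the inner integral is routine: it is dominated by $2^{q/2}g(v)\int_{|v|/q}^\infty \rho^{-1-2s}\dd\rho\lesssim 2^{q/2}q^{2s}|v|^{-2s}g(v)$. For the outer integral, the same condition $|v+w|\leq |v|/\sqrt 2$ forces $|w|\approx |v|$ (by the triangle inequality), so $|w|^{\gamma+2s}\lesssim |v|^{\gamma+2s}$. Changing variables $z=v+w$ and applying Chebyshev's inequality together with the moment bound \eqref{e:Kbound} gives
\[ \int_{|z|\geq |v|/\sqrt{2q+4}} f(z)\dd z\lesssim \frac{(2q+4)^{(\gamma+2s)/2}}{|v|^{\gamma+2s}}K_0 t^{-(\gamma+2s)/\gamma}.\]
Multiplying the inner and outer estimates and absorbing the polynomial-in-$q$ factor $q^{2s}(2q+4)^{(\gamma+2s)/2}$ into the exponential $2^{q/2}$ (harmless for large $q$, absorbed into the universal implicit constant otherwise) produces $\cE_2\lesssim K_0 t^{-(\gamma+2s)/\gamma}2^q|v|^{-2s}g(v)$.

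The main obstacle, or more accurately the decisive insight, is the projection identity $|v_\perp|\leq |v+w|$: this is exactly what makes the cutoff $|v+w|\leq |v|/\sqrt 2$ yield only an exponential-in-$q$ control on the ratio $g(v+h)/g(v)$. Without this observation the inner integral would blow up in $|v|$, and one could not obtain any estimate on $\cE_2$ in terms of $g(v)$; everything else reduces to a one-dimensional surface integral in $\rho$ and a Chebyshev moment bound in $z$.
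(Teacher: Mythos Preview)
Your proof is correct and follows essentially the same route as the paper's: the key step is the geometric observation that $|v+w|\leq |v|/\sqrt 2$ forces $|v\cdot h|\leq |v+w|\,|h|$ for $h\perp w$ (you phrase this via the projection identity $v_\perp=(v+w)_\perp$, the paper writes it directly), yielding $|v+h|\geq |v|/\sqrt 2$ and hence $g(v+h)\leq 2^{q/2}g(v)$; the inner integral in $h$ and the outer moment estimate then proceed identically up to a cosmetic difference (the paper bounds $|w|^{\gamma+2s}\lesssim q^{(\gamma+2s)/2}|z|^{\gamma+2s}$ and integrates the moment directly, whereas you bound $|w|^{\gamma+2s}\lesssim |v|^{\gamma+2s}$ and apply Chebyshev, which gives the same factor).
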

\begin{proof}
Since we restrict $w$ so that $|v+w|\leq \displaystyle\frac{|v|}{\sqrt{2}}$, this forces $h\perp w$ to be such that $$|h\cdot v|\leq \frac{|h| \ |v|}{\sqrt{2}}.$$
Hence for any such $h$, 
$$|v+h|^2 = |v|^2 + 2v\cdot h + |h|^2 \geq |v|^2 - \sqrt{2}|v| \ |h| + |h|^2 \geq \frac{|v|^2}{2}.$$
Thus, since $f(v)\geq 0$ and $g(t,v) = N_q t^{-q/\gamma-d/2s} |v|^{-q}$,
\begin{equation}
f(v+h)-f(v)\leq g(v+h)\leq 2^{q/2} g(v), \qquad |v+w|\leq \frac{|v|}{\sqrt{2}}, \ h\perp w.
\end{equation}
Therefore, we can bound the inner integral in $h$ by 
\begin{equation}
\begin{split}
 \int\limits_{h\perp w, \ |h|\geq |v|/q} (f(v+h)-f(v)) A(|h|,|w|)|h|^{-d+1-2s} \dd h &\lesssim 2^{q/2}g(v)\int\limits_{|v|/q}^\infty r^{-1-2s} \dd r 
 \\& \lesssim 2^{q/2} q^{2s}|v|^{-2s}g(v).
 \end{split}
\end{equation}
 As in the proof of Lemma \ref{l:E1bound}, it follows that the total error term can be bounded as 
\begin{equation}\label{e:E2bound}
\begin{split}
\cE_2(v) & \lesssim 2^{q/2} q^{2s}|v|^{-2s}g(v)\int\limits_{|v|/\sqrt{2}\geq |v+w|\geq |v|/\sqrt{2q+4}} f(v+w)|w|^{\gamma+2s} \dd w 
\\& \lesssim 2^{q/2}q^{2s} |v|^{-2s} g(v)(1+\sqrt{2q+4})^{\gamma+2s}\int\limits_{\R^d\setminus B_{|v|/\sqrt{2q+4}}}f(z)|z|^{\gamma+2s} \dd z 
\\& \lesssim K_0t^{-\frac{\gamma+2s}{\gamma}}2^{q/2}q^{\gamma/2 + 3s} |v|^{-2s}g(v)
\\&\lesssim K_0t^{-\frac{\gamma+2s}{\gamma}}2^{q}|v|^{-2s}g(v).
\end{split}
\end{equation}

\end{proof}

\subsection{Bounds on low mass error term $\cE_3(v)$}


\begin{lemma}\label{l:E3bound}
With $f$, $g$, and $q$ as in Lemma \ref{l:E1bound}, $K_0$ as in \eqref{e:Kbound}, and $(t,x,v)$ a breakthrough point with $|v|\geq 1$, we have
\[ \cE_3(v) \lesssim K_0t^{-\frac{\gamma+2s}{\gamma}}2^{q}|v|^{-2s} g(v),\]
where $\cE_3(v)$ is defined by \eqref{e:badterm3defn}.
\end{lemma}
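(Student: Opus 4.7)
The plan is to mirror the approach of Lemma \ref{l:E2bound}, with the roles of the inner and outer integrals swapped. The key geometric observation is that in the $\cE_3$ representation the constraint $|v+w|\geq|v|/\sqrt 2$ falls on the inner variable $w\perp h$, and since $g$ touches $f$ from above, this directly yields the pointwise bound $g(v+w)\leq 2^{q/2}g(v)$, which will be the source of the $2^q$ factor in the final bound (absorbing polynomial-in-$q$ losses into $2^{q/2}$ for $q$ large, exactly as in the proof of Lemma \ref{l:E2bound}).

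First I would estimate the inner integral
\[
I(v,h) := \int_{w\perp h,\ |v+w|\geq|v|/\sqrt 2} f(v+w)|w|^{\gamma+2s+1}A(|h|,|w|)\dd w.
\]
Since $|v+w|\geq|v|/\sqrt 2$ forces $|w|\leq|v|+|v+w|\leq(1+\sqrt 2)|v+w|$, one has $|w|^{\gamma+2s+1}\lesssim|v+w|^{\gamma+2s+1}$. After changing variables $z=v+w$ on the hyperplane $v+h^\perp$ and splitting $|z|$ into the near range $[|v|/\sqrt 2,\,2|v|]$ and the far range $[2|v|,\infty)$, the near range is bounded using $f(v+w)\leq 2^{q/2}g(v)$ and the $(d-1)$-dimensional volume $\lesssim|v|^{d-1}$, and the far range using $f\leq g$ and the convergence of $\int_{2|v|}^\infty r^{\gamma+2s+d-q-1}\dd r$ (ensured by $q>d+\gamma+2s+1$). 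To produce the $K_0 t^{-(\gamma+2s)/\gamma}$ factor from \eqref{e:Kbound}, the near range should instead be bounded using the full-space moment $\int f(z)|z|^{\gamma+2s}\dd z$ after absorbing an extra $|v+w|$ factor into $|z|$ via $|v|\leq \sqrt 2 |v+w|$.

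Second, I would estimate the outer integral using $(f(v+h)-f(v))\leq g(v+h)$, noting that the contribution of $-f(v)=-g(v)$ against the positive kernel $|h|^{-d-2s}I(v,h)$ is non-positive and can be dropped. Changing variables $z=v+h$, this outer piece becomes an integral of $g(z)|z-v|^{-d-2s}I(v,z-v)$ over $\{|z|\geq|v|/q,\,|z-v|\geq|v|/q\}$, which I would split by $|z|\geq 2|v|$ (where $|z-v|\geq|z|/2$ tames the kernel) and $|z|\leq 2|v|$ (where $|z-v|\geq|v|/q$ gives $|z-v|^{-d-2s}\lesssim q^{d+2s}|v|^{-d-2s}$). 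Combined with the mass bound $\int f\leq M_0$ and the $K_0$-based estimate of $I(v,h)$ from the previous step, this produces the required $|v|^{-2s}$ decay with only polynomial-in-$q$ prefactors outside $2^{q/2}$.

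The main obstacle is controlling the annulus $|v+h|\in[|v|/q,\,|v|/\sqrt 2]$, where $g(v+h)$ grows up to $q^q g(v)$. Here one must exploit that on this annulus $|h|\in[(1-\tfrac{1}{\sqrt 2})|v|,\,(1+\tfrac{1}{\sqrt 2})|v|]\sim|v|$, so $|h|^{-d-2s}$ is universal; combining this with the hyperplane volume estimate on $I(v,h)$ and the moment $K_0$ ensures that the bound depends on $q$ only through powers that, together with the direct $2^{q/2}$ from $g(v+w)\leq 2^{q/2}g(v)$, are absorbed into $2^q$ for $q$ large.
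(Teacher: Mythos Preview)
There is a genuine gap in your outer-integral strategy. You propose to bound the outer $h$-integral by passing from $f(v+h)$ to $g(v+h)$ and then integrating $g(z)|z-v|^{-d-2s}I(v,z-v)$. But on the annulus $|v+h|\in[|v|/q,\,|v|/\sqrt 2]$---which is part of the integration domain in $\cE_3$---one has $g(v+h)=(|v|/|v+h|)^q g(v)$, which can be as large as $q^q g(v)$. Since this annulus has $d$-dimensional volume $\sim|v|^d$ and $|h|^{-d-2s}\sim|v|^{-d-2s}$ there, you pick up a factor $q^q$ that certainly cannot be absorbed into $2^q$. Your final paragraph asserts that ``the bound depends on $q$ only through powers,'' but this is exactly what fails once you have replaced $f$ by $g$ in the outer integral. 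Invoking ``the moment $K_0$'' at this stage does not help either: after passing to $g$ there is no $f$ left in the outer integral to which the moment bound could apply, and your earlier suggestion to extract $K_0$ from the \emph{inner} hyperplane integral cannot work because \eqref{e:Kbound} is a $d$-dimensional moment, not a $(d-1)$-dimensional one.

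The paper's proof avoids this by reversing the roles you assign: it uses $f\leq g$ only on the \emph{inner} integral (where $|v+w|\geq|v|/\sqrt 2$ gives the harmless $2^{q/2}$), obtaining $I(v,h)\lesssim 2^{q/2}|v|^{\gamma+2s+d}g(v)$ via a direct computation of the hyperplane integral of $|z|^{-q'}$; it then keeps $f(v+h)$ in the \emph{outer} integral and uses the constraint $|v+h|>|v|/q$ to write
\[
\int_{|h|\geq|v|/q,\,|v+h|>|v|/q} f(v+h)\,|h|^{-d-2s}\dd h
\;\lesssim\; q^{d+2s}|v|^{-d-2s}\!\int_{|z|>|v|/q}\! f(z)\dd z
\;\lesssim\; q^{d+\gamma+4s}|v|^{-d-\gamma-4s}K_0 t^{-(\gamma+2s)/\gamma},
\]
the last step by inserting $(|z|q/|v|)^{\gamma+2s}\geq1$. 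This is where $K_0$ legitimately enters, and the $q$-dependence is polynomial throughout.
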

\begin{proof}
We first have to bound the inner integral in $w$.  By bounding $f$ with $g$ and noting that $|w|\leq |v+w|+|v|\lesssim |v+w|$, we obtain 
\begin{equation}
\int\limits_{w\perp h, \ |v+w|\geq |v|/\sqrt{2}} f(v+w)|w|^{\gamma+2s+1}  A(|h|,|w|)\dd w \lesssim \int\limits_{w\perp h, \ |v+w|\geq |v|/\sqrt{2}} g(v+w)|v+w|^{\gamma+2s+1}\dd w
\end{equation}
Taking $z = v+w$, we thus need to bound 
\begin{equation}
\int\limits_{(z-v)\perp h, \ |z|\geq |v|/\sqrt{2}} g(z)|z|^{\gamma+2s+1} \dd z.  
\end{equation}
As $(z-v)\perp h$, we have that $z\cdot h = v\cdot h$.  Let $z^\perp$ be the portion of $z$ perpendicular to $h$ and $\hat{h}=\displaystyle\frac{h}{|h|}$.  Then, letting $q' = q-\gamma-2s-1>d$, we see that
\begin{equation}
\begin{split}
\int\limits_{(z-v)\perp h, \ |z|\geq |v|/\sqrt{2}} |z|^{-q'}\dd z &= \int\limits_{|z|\geq |v|/\sqrt{2}} ((\hat{h}\cdot v)^2+|z^\perp|^2)^{-q'/2}\dd z^\perp 
\\& = \int\limits_{\sqrt{(|v|^2/2 - (\hat{h}\cdot v)^2)_+}}^\infty ((\hat{h}\cdot v)^2 + r^2)^{-q'/2}r^{d-2}\dd r.
\end{split}
\end{equation}
In the case that $(\hat{h}\cdot v)^2 \geq |v|^2/2$, we can bound this easily as 
\begin{equation}
\begin{split}
\int\limits_{0}^\infty ((\hat{h}\cdot v)^2 + r^2)^{-q'/2}r^{d-2}\dd r &= |\hat{h}\cdot v|^{-q'+d-1}\int\limits_0^\infty (1+(r')^2)^{-q'/2} (r')^{d-2} \dd r'
\\& \lesssim 2^{q'/2}|v|^{-q'+d-1}
\\&\lesssim 2^{q/2}|v|^{-q+\gamma+2s+d},
\end{split}
\end{equation}
where the integral in $r'$ is $\lesssim 1$ since $q' > d$.

If $|\hat{h}\cdot v|^2 < |v|^2/2$, then by shifting the integral we can similarly get that 
\begin{equation}
\begin{split}
\int\limits_{\sqrt{|v|^2/2 - (\hat{h}\cdot v)^2}}^\infty &((\hat{h}\cdot v)^2 + r^2)^{-q'/2}r^{d-2}\dd r
\\& = \int\limits_0^\infty \left(|v|^2/2 + 2r\sqrt{|v|^2/2 - (\hat{h}\cdot v)^2} + r^2\right)^{-q'/2} \left(r+\sqrt{|v|^2/2 - (\hat{h}\cdot v)^2}\right)^{d-2}\dd r 
\\& \lesssim \int\limits_0^\infty (|v|^2/2 +r^2)^{-(q'-d+2)/2} \dd r 
\\& \lesssim 2^{q'/2}|v|^{-q'+d-1}
\\&\lesssim 2^{q/2}|v|^{-q+\gamma+2s+d}.
\end{split}
\end{equation}
Thus in either case, recalling the formula $g(t,v) = N_q t^{-q/\gamma-d/2s}|v|^{-q}$, we have that 
\begin{equation}
\int\limits_{w\perp h, |v+w|\geq |v|/\sqrt{2}} f(v+w)|w|^{\gamma+2s+1}  A(|h|,|w|)\dd w \lesssim 2^{q/2}|v|^{\gamma+2s+d}g(v).
\end{equation}

Hence, we can bound our third error term as 
\begin{equation}\label{e:E3bound}
\begin{split}
\cE_3(v)&\lesssim 2^{q/2}|v|^{\gamma+2s+d}g(v)\int\limits_{ |h|\geq |v|/q, |v+h|>|v|/q} (f(v+h)-f(v)) |h|^{-d-2s} \dd h 
\\& \lesssim 2^{q/2}q^{2s+d}|v|^{\gamma}g(v)\int\limits_{ |h|\geq |v|/q, |v+h|>|v|/q} f(v+h)\dd h
\\& \lesssim 2^{q/2}q^{\gamma +4s+d} |v|^{-2s}g(v)\int\limits_{\R^d\setminus B_{|v|/q}} f(z)|z|^{\gamma+2s}\dd z 
\\& \lesssim K_0t^{-\frac{\gamma+2s}{\gamma}} 2^{q/2}q^{\gamma +4s+d} |v|^{-2s}g(v)
\\& \lesssim K_0t^{-\frac{\gamma+2s}{\gamma}}2^{q}|v|^{-2s} g(v),
\end{split}
\end{equation}
 using \eqref{e:Kbound}. Here, it is necessary to use the bound on the $\gamma+2s$ moment of $f$ so that the estimate of $\cE_3(v)$ has sufficient decay for large $|v|$.
\end{proof}

\subsection{Bound on the delicate error term $\cE_4(v)$}
%
%

\begin{lemma}\label{l:E4bound}
With $f$, $g$, and $q$ as in Lemma \ref{l:E1bound} and $(t,x,v)$ a breakthrough point with $|v|\geq 1$, we have
\[ \cE_4(v) \lesssim |v|^\gamma g(v),\]
where $\cE_4(v)$ is defined by \eqref{e:badterm4defn}.
\end{lemma}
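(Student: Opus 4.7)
The plan is to exploit a geometric improvement specific to the region of integration for $\cE_4$: the constraint $|v+h|\leq |v|/q$ forces $h$ to be nearly antiparallel to $v$, i.e. $h \approx -v$, so that $|h|\approx |v|$ and $\hat h\cdot v\approx -|v|$. Concretely, writing $h=-v+(v+h)$, the angle $\alpha$ between $h$ and $-v$ satisfies $\sin\alpha \lesssim 1/q$, hence $(\hat h\cdot v)^2 \geq (1-C/q^2)|v|^2 \geq |v|^2/2$ for $q$ large. This is exactly the geometric feature that will let us avoid the $2^{q/2}$ tail factor which appeared in $\cE_2$ and $\cE_3$.

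First I would treat the inner integral in $w$. As in the proof of Lemma \ref{l:E3bound}, use $f\leq g$, the bound $A\approx 1$, and the inequality $|w|\lesssim |v+w|$ valid on $\{|v+w|\geq |v|/\sqrt 2\}$, then change variables to $z=v+w$ so that the hyperplane $w\perp h$ becomes the affine hyperplane $\{z:\hat h\cdot z = \hat h\cdot v\}$, on which $|z|^2=(\hat h\cdot v)^2+|z^\perp|^2$. Because $(\hat h\cdot v)^2\geq |v|^2/2$, the entire hyperplane already lies in $\{|z|\geq |v|/\sqrt 2\}$, so there is no boundary contribution. With $q'=q-\gamma-2s-1>d$, the integral becomes
\begin{equation}
N_q t^{-q/\gamma-d/2s}\int_{\R^{d-1}}\bigl((\hat h\cdot v)^2+|z^\perp|^2\bigr)^{-q'/2}\dd z^\perp = C\,|\hat h\cdot v|^{-q'+d-1}\,N_q t^{-q/\gamma-d/2s},
\end{equation}
where the remaining $1$-dimensional integral $\int_0^\infty (1+r^2)^{-q'/2}r^{d-2}\dd r$ converges to a universal constant. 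The crucial point is $|\hat h\cdot v|\approx |v|$, so this is $\lesssim |v|^{\gamma+2s+d}g(v)$ \emph{without} the $2^{q/2}$ prefactor that appeared in Lemma \ref{l:E3bound}.

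Next I would handle the outer integral in $h$. Since $|h|\approx |v|$ throughout the region, $|h|^{-d-2s}\lesssim |v|^{-d-2s}$. Dropping the $-f(v)$ contribution (which is nonpositive) and changing variables to $u=v+h$,
\begin{equation}
\int_{|v+h|\leq |v|/q}\bigl(f(v+h)-f(v)\bigr)|h|^{-d-2s}\dd h \;\lesssim\; |v|^{-d-2s}\int_{|u|\leq |v|/q}f(u)\dd u \;\leq\; M_0\,|v|^{-d-2s},
\end{equation}
by the mass upper bound in \eqref{e:hydrodynamicbounds}. Combining this with the inner integral bound yields
\begin{equation}
\cE_4(v)\;\lesssim\; M_0\,|v|^{-d-2s}\cdot |v|^{\gamma+2s+d}g(v)\;\lesssim\;|v|^\gamma g(v),
\end{equation}
as claimed.

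The reason this term is called delicate is exactly that its bound matches the order $|v|^\gamma g(v)$ of the good term $\cG$, so unlike $\cE_1,\cE_2,\cE_3$ we cannot simply push a $|v|^{-2s}$ decay against it. The main obstacle, and what I would be careful about, is ensuring that no factors of $q$ creep into the estimate: the $L^\infty_v$ moment bound would be fatal (producing $2^{q/2}$), as would any use of $f\leq g$ on the outer $h$ integral (since $\int_{|u|\leq |v|/q}g(u)\dd u$ is divergent). Using the mass bound $M_0$ directly and exploiting the near-antiparallel geometry in the inner integral are the two ingredients that keep the constant universal, which is what lets $\cG(v)\lesssim -q^s|v|^\gamma g(v)$ absorb $\cE_4$ for $q$ sufficiently large.
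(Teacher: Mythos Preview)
Your proof is correct and follows essentially the same route as the paper's: you use $f\leq g$ and the change of variables $z=v+w$ to reduce the inner integral to $|\hat h\cdot v|^{-q'+d-1}$ times a convergent one-dimensional integral, then exploit the key geometric fact $(\hat h\cdot v)^2\geq (1-1/q^2)|v|^2$ (so $|\hat h\cdot v|^{-q}\lesssim |v|^{-q}$ with a constant bounded uniformly in $q$) to avoid the $2^{q/2}$ loss, and finally bound the outer integral via $|h|\approx |v|$ and the mass bound. Your explanation of why these two ingredients are essential is exactly the point the paper is making.
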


\begin{proof}
Again, we need to first bound the inner integral.  As $|v+h|\leq \displaystyle\frac{|v|}{q}$, we have that $$\displaystyle\frac{|v\cdot h|}{|v| \ |h|} \geq \sqrt{1-\frac{1}{q^2}}\geq \frac{1}{\sqrt{2}}.$$  Hence, using $f\leq g$ as in Lemma \ref{l:E3bound}, we can bound the inner integral as we did for $\cE_3(v)$ as 
\begin{equation}
\begin{split}
 \int\limits_{(z-v)\perp h, \ |z|\geq |v|/\sqrt{2}} |z|^{-q'}\dd z &= \int\limits_{|z|\geq |v|/\sqrt{2}} ((\hat{h}\cdot v)^2+|z^\perp|^2)^{-q'/2}\dd z^\perp 
\\& = \int\limits_{0}^\infty ((\hat{h}\cdot v)^2 + r^2)^{-q'/2}r^{d-2}\dd r
\\& \leq |\hat{h}\cdot v|^{-q+\gamma+2s+d} \int\limits_0^\infty (1+(r')^2)^{-q'/2} (r')^{d-2} \dd r'.
\end{split}
\end{equation}
We need to be careful about how we bound this expression, since a factor $2^{q/2}$ in this term would ruin our estimates later on.  As $|v+h|\leq\displaystyle\frac{|v|}{q}$, we have that
\begin{equation}
|\hat{h}\cdot v|^{-q}  \leq\left(\sqrt{1-\frac{1}{q^2}}\right)^{-q}  |v|^{-q}\lesssim |v|^{-q}.
\end{equation}
Furthermore, 

	\begin{equation}
\int\limits_0^\infty (1+(r')^2)^{-q'/2} (r')^{d-2} \dd r' \lesssim 1.
\end{equation}
Hence, 
\begin{equation}
\int\limits_{w\perp h, |v+w|\geq |v|/\sqrt{2}} f(v+w)|w|^{\gamma+2s+1}  A(|h|,|w|)\dd w\lesssim |v|^{\gamma+2s+d}g(v)
\end{equation}
Since $|h|\approx |v|$ whenever $|v+h|\leq \displaystyle\frac{|v|}{q}$, we thus have that 
\begin{equation}\label{e:E4bound}
\begin{split}
\cE_4(v) &\lesssim |v|^{\gamma+2s+d} g(v)\int\limits_{|v+h|\leq |v|/q} f(v+h)|h|^{-d-2s}\dd h 
\\& \lesssim |v|^{\gamma} g(v),
\end{split}
\end{equation}
as desired.
\end{proof}

\subsection{Bounding the convolution $Q_2(f,f)(v)$}

Recall that 
\begin{equation}
Q_2(f,f)(v) = c_{d,s}f(v)\int\limits_{\R^d} f(v+w)|w|^\gamma \dd w.  
\end{equation}
Since $\gamma>0$, we can bound $|w|^\gamma$ as $$|w|^\gamma \lesssim |v|^\gamma+ |v+w|^{\gamma}.$$
Thus as $\gamma \leq 2$, we can bound this by our mass and energy bounds  
\begin{equation}
\int\limits_{\R^d} f(v+w)|w|^\gamma \dd w \lesssim |v|^\gamma \int\limits_{\R^d} f(z) \dd z + \int\limits_{\R^d} f(z)|z|^\gamma \dd z \lesssim (1+|v|^\gamma).
\end{equation}
This implies the following simple estimate:
\begin{lemma}\label{l:Q2bound}
At a breakthrough point $(t,x,v)$ such that $|v|\geq 1$, we have
\[Q_2(f,f)(v)\lesssim |v|^\gamma g(v).\]
\end{lemma}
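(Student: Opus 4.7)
The plan is to simply combine the convolution estimate derived immediately before the lemma statement with the defining property of a breakthrough point. The author has already shown that
\[
\int_{\R^d} f(v+w)|w|^\gamma \, \mathrm{d}w \lesssim 1+|v|^\gamma,
\]
using the decomposition $|w|^\gamma \lesssim |v|^\gamma + |v+w|^\gamma$ (valid since $\gamma \in (0,2]$) and the mass/energy bounds from \eqref{e:hydrodynamicbounds}. So the only remaining work is to handle the prefactor $f(v)$ in the formula $Q_2(f,f)(v) = c_{d,s}f(v)\int f(v+w)|w|^\gamma\,\mathrm{d}w$.

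At a breakthrough point $(t,x,v)$ as in \eqref{e:touchabove}, we have the identity $f(t,x,v)=g(t,v)$. Substituting this into the formula for $Q_2$ and inserting the integral bound above gives
\[
Q_2(f,f)(v) \lesssim g(v)\,(1+|v|^\gamma).
\]
Using the assumption $|v|\geq 1$ and $\gamma>0$, we have $1 \leq |v|^\gamma$, so $1+|v|^\gamma \lesssim |v|^\gamma$, which yields the claimed bound.

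There is essentially no obstacle here: the lemma is a direct consequence of the convolution estimate stated in the paragraph preceding it together with the breakthrough identity $f(v)=g(v)$. The only thing to be careful about is that the bound we produce must be stated in terms of $g(v)$ (and not $f(v)$), which is why invoking \eqref{e:touchabove} explicitly is essential — this is what makes the lemma usable in the later comparison with $\partial_t g(t,v)$ and with the negative term $\cG(v)$ from Lemma \ref{l:Gbound}. Note also that the resulting upper bound $|v|^\gamma g(v)$ matches exactly the form of $\cE_4(v)$ obtained in Lemma \ref{l:E4bound}, so $Q_2$ will be absorbed into the good term on the same footing as the delicate error term when $q$ is taken sufficiently large.
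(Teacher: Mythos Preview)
Your proof is correct and is essentially identical to the paper's argument: the paper also derives the convolution bound $\int f(v+w)|w|^\gamma\,\mathrm{d}w \lesssim 1+|v|^\gamma$ just before the lemma, and the lemma then follows immediately from $f(v)=g(v)$ at the breakthrough point together with $|v|\geq 1$.
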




%
%

\subsection{Completing the estimate on $Q(f,f)$}

\begin{theorem}\label{t:f-upper-bound}
 There exists $q_0>0$ universal, such that for all $q\geq q_0$, there holds
\begin{equation}\label{e:fbound}
f(t,x,v)\leq N_q t^{-q/\gamma -d/2s}|v|^{-q}, \qquad  (t,x,v)\in (0,1]\times \T^d\times \R^d,
\end{equation}
with
\begin{equation}\label{e:Nq}
N_q = \max\left\{C^qK_0^{q/(\gamma+2s)} 2^{q^2/(\gamma+2s)},  C^{q/\gamma}\left(\frac{q}{\gamma}+\frac{d}{2s}\right)^{q/\gamma}\right\},
\end{equation}
and $C>0$ universal.
\end{theorem}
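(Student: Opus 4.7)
The plan is to derive a contradiction at an alleged first crossing point via the breakthrough framework of Section \ref{s:breakthrough}. Suppose $N_q$ is at least as large as the quantity in \eqref{e:Nq} (for constants $C, q_0$ to be chosen) and that $f(t_0,x_0,v_0) = g(t_0,v_0)$ with $f \leq g$ on $(0,t_0]\times\T^d\times\R^d$ as in \eqref{e:touchabove}. The first step is to establish that $|v_0|$ is automatically large: combining Proposition \ref{p:Linfty} with the crossing identity gives
\[ N_0 t_0^{-d/2s} \;\geq\; f(t_0,x_0,v_0) = N_q t_0^{-q/\gamma - d/2s} |v_0|^{-q}, \qquad \text{hence}\qquad |v_0| \;\geq\; (N_q/N_0)^{1/q}\, t_0^{-1/\gamma}. \]
For $N_q$ as in \eqref{e:Nq} and $q \geq q_0$ universal, the right-hand side will exceed the thresholds $R_q \approx q^{1/2}$ and $1$ required by Lemmas \ref{l:Gbound}--\ref{l:Q2bound}, so all of those bounds apply at $v_0$.

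Adding the five estimates in these lemmas together with the decomposition $Q(f,f) = \mathcal{G} + \mathcal{E}_1 + \mathcal{E}_2 + \mathcal{E}_3 + \mathcal{E}_4 + Q_2$ yields
\[ Q(f,f)(v_0) \;\leq\; -c_1 q^s |v_0|^\gamma g(t_0,v_0) \;+\; c_2 K_0 t_0^{-(\gamma+2s)/\gamma} 2^q |v_0|^{-2s} g(t_0,v_0) \;+\; c_3 |v_0|^\gamma g(t_0,v_0), \]
with $c_1,c_2,c_3>0$ universal. Choosing $q_0$ so large that $c_3 \leq c_1 q^s/4$ absorbs $\mathcal{E}_4$ and $Q_2$ into a quarter of the good term, leaving
\[ Q(f,f)(v_0) \;\leq\; -\tfrac{3c_1}{4}\, q^s |v_0|^\gamma g(t_0,v_0) \;+\; c_2 K_0 t_0^{-(\gamma+2s)/\gamma} 2^q |v_0|^{-2s} g(t_0,v_0). \]

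Since $\partial_t g(t_0,v_0) = -(q/\gamma + d/(2s))\,t_0^{-1} g(t_0,v_0)$, the desired contradiction $Q(f,f)(v_0) < \partial_t g(t_0,v_0)$ reduces to the two pointwise comparisons
\[ \tfrac{c_1}{4}\, q^s |v_0|^{\gamma+2s} \;\geq\; c_2 K_0\, 2^q\, t_0^{-(\gamma+2s)/\gamma} \qquad \text{and}\qquad \tfrac{c_1}{4}\, q^s |v_0|^\gamma \;\geq\; (q/\gamma + d/(2s))\, t_0^{-1}. \]
Both follow from the crossing-point lower bound on $|v_0|$, provided
\[ (N_q/N_0)^{1/q} \;\gtrsim\; (K_0\, 2^q)^{1/(\gamma+2s)} \qquad \text{and}\qquad (N_q/N_0)^{1/q} \;\gtrsim\; (q/\gamma+d/(2s))^{1/\gamma}. \]
Solving for $N_q$, taking the maximum of the two resulting thresholds, and absorbing $N_0$ into the universal constant $C$ (legitimate since $N_0^{1/q} \to 1$) produces exactly the formula \eqref{e:Nq}.

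The main technical obstacle --- and the reason for the doubly-exponential factor $2^{q^2/(\gamma+2s)}$ in $N_q$ --- is the $2^q$ weight on the error terms $\mathcal{E}_2$ and $\mathcal{E}_3$, which traces back to the bound $g(v+h) \leq 2^{q/2} g(v)$ on the region $|v+h| \geq |v|/\sqrt{2}$. Absorbing this weight into the good term $-q^s |v|^\gamma g$ forces $|v_0|^{\gamma+2s}$ to exceed a constant multiple of $K_0\,2^q\,t_0^{-(\gamma+2s)/\gamma}$, and pushing this requirement through the $L^\infty$-crossing lower bound $|v_0| \geq (N_q/N_0)^{1/q} t_0^{-1/\gamma}$ is precisely what inflates $N_q$ by the $2^{q^2/(\gamma+2s)}$ factor.
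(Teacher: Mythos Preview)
Your proposal is correct and follows essentially the same route as the paper: deduce the lower bound $|v_0|\geq (N_q/N_0)^{1/q}t_0^{-1/\gamma}$ from Proposition \ref{p:Linfty} at the crossing point, invoke Lemmas \ref{l:Gbound}--\ref{l:Q2bound}, absorb $\cE_4+Q_2$ into $\cG$ for $q$ large, and then choose the two branches of $N_q$ so that the remaining $K_0 2^q$--weighted error and the $\partial_t g$ term are each dominated by a fixed fraction of the good term. The only cosmetic difference is that the paper first fixes the lower bound \eqref{e:vlowerbound2} and then reads off $Q(f,f)<-C^{-1}q^s|v|^\gamma g$, whereas you phrase the same absorption as two simultaneous inequalities; also note that $N_0$ can be absorbed into $C^q$ simply because $N_0$ is universal (your remark that $N_0^{1/q}\to 1$ is unnecessary).
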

\begin{proof}
First, we claim that a crossing point must occur at large velocity: more precisely, at any crossing point $(t,x,v)$, we must have
\begin{equation}\label{e:vlowerbound2}
|v|^{\gamma+2s}\gtrsim K_0t^{-\frac{\gamma+2s}{\gamma}} 2^q .
\end{equation}
To guarantee this, we need to take advantage of the $L^\infty$ bound of \cite{silvestre2016boltzmann}, quoted above as Proposition \ref{p:Linfty}, which gives
\begin{equation}
f(t,x,v)\leq N_0 t^{-d/2s},
\end{equation}
Recall that $g(t,v) = N_q t^{-q/\gamma -d/2s}|v|^{-q}$.  At a touching point, we must necessarily have 
\begin{equation}
N_q t^{-q/\gamma -d/2s}|v|^{-q} \leq N_0 t^{-d/2s} 
\end{equation}
Hence, we get the following lower bound on $|v|$:
\begin{equation}\label{e:vlowerbound}
|v|\geq \left(\frac{N_q}{N_0}\right)^{1/q}t^{-1/\gamma}, 
\end{equation}
Thus by choosing 
\begin{equation}
\frac{N_q}{N_0} \geq   C^qK_0^{q/(\gamma+2s)} 2^{q^2/(\gamma+2s)}, 
\end{equation}
for some $C$ universal, we can guarantee that \eqref{e:vlowerbound2} holds.

Next, since $|v|\gtrsim 2^{q/(\gamma+2s)} \gtrsim \sqrt q$, the combination of Lemmas \ref{l:Gbound}, \ref{l:E1bound}, \ref{l:E2bound}, \ref{l:E3bound}, \ref{l:E4bound}, and \ref{l:Q2bound} gives
\begin{equation}
\left\{\begin{array}{cl} \cG(v)& \lesssim -q^s|v|^\gamma g(v), \\ \cE_1(v)+\cE_2(v)+\cE_3(v) & \lesssim K_0t^{-\frac{\gamma+2s}{\gamma}}2^{q}|v|^{-2s}g(v), \\ \cE_4(v)+Q_2(f,f)(v) & \lesssim |v|^\gamma  g(v),\end{array}\right.
\end{equation}
at any crossing point. By taking $q$ sufficiently large (universal), we can thus guarantee that 
\begin{equation}
\cG(v)+\cE_4(v) +Q_2(f,f)(v)\lesssim -q^s|v|^\gamma g(v).
\end{equation}
The remaining terms $\cE_1(v) + \cE_2(v) + \cE_3(v)$, can be bounded using the lower bound \eqref{e:vlowerbound2} for $|v|$, and we have shown 
%
\begin{equation}\label{e:Qff}
Q(f,f)(t,x,v)< -C^{-1}q^s|v|^\gamma g(t,v),
\end{equation}
for some $C$ universal.

Our choice of $N_q$ also implies 
\begin{equation}
\frac{N_q}{N_0} \geq  C^{q/\gamma}\left(\frac{q}{\gamma}+\frac{d}{2s}\right)^{q/\gamma},
\end{equation}
which, in combination with \eqref{e:vlowerbound}, guarantees that 
\begin{equation}
|v|^\gamma \geq C\left(\frac{q}{\gamma}+\frac{d}{2s}\right)t^{-1}.
\end{equation}
Hence, via \eqref{e:Qff}, we have
\begin{equation}
Q(f,f)(v)< -C^{-1}q^s|v|^\gamma g(v)\leq -\left(\frac{q}{\gamma}+\frac{d}{2s}\right)t^{-1}g(t,v) = \partial_t g(t,v).
\end{equation}
Thus, by the breakthrough argument of Section \ref{s:breakthrough}, we see that taking $N_q$ as in \eqref{e:Nq} for $q$ sufficiently large 
guarantees the upper bound \eqref{e:fbound} for $f$.
\end{proof}

\section{Bound on $K_0$ and conclusion of the proof}\label{s:proof}

All that remains is to show that $K_0 = \sup_{t,x} \int_{\R^d} f(t,x,v) |v|^{\gamma+2s}\dd v$ is in fact bounded by a universal constant to complete the proof of our main theorem.  

\begin{proof}[Proof of Theorem \ref{t:main}]
First, we can assume without loss of generality that for all $q\geq q_0$,
\begin{equation}\label{e:Nq2}
N_q = C^qK_0^{q/(\gamma+2s)} 2^{q^2/(\gamma+2s)}.
\end{equation}
Indeed, otherwise there holds, for some $q\geq q_0$,
\begin{equation}
C^qK_0^{q/(\gamma+2s)} 2^{q^2/(\gamma+2s)} <  C^{q/\gamma}\left(\frac{q}{\gamma}+\frac{d}{2s}\right)^{q/\gamma}.
\end{equation}
Then
\begin{equation}
K_0 \lesssim \frac{q}{2^q} \lesssim 1,
\end{equation}
so $K_0$ is bounded by a universal constant.  

From \eqref{e:Nq2} and Theorem \ref{t:f-upper-bound}, we have 
\begin{equation}
f(t,x,v)\lesssim_q K_0^{\frac{q}{\gamma+2s}}t^{-\frac{q}{\gamma}-\frac{d}{2s}}|v|^{-q},
\end{equation}
for all $q\geq q_0$. Fix some $q >\max(q_0,d+\gamma+2s)$.  Then for any $R>0$, we have that 
\begin{equation}
\begin{split}
t^{\frac{\gamma+2s}{\gamma}}\int\limits_{\R^d} f(t,x,v)|v|^{\gamma+2s}\dd v &\leq t^{\frac{\gamma+2s}{\gamma}}\int\limits_{B_R} f(t,x,v)|v|^{\gamma+2s} \dd v + t^{\frac{\gamma+2s}{\gamma}}\int\limits_{\R^d\setminus B_R} f(t,x,v)|v|^{\gamma+2s}\dd v 
\\& \lesssim_q t^{\frac{\gamma+2s}{\gamma}}R^{\gamma+2s-2}\int\limits_{B_R} f(t,x,v)|v|^{2} \dd v + t^{\frac{\gamma+2s}{\gamma}-\frac{q}{\gamma}-\frac{d}{2s}}K_0^{\frac{q}{\gamma+2s}}\int\limits_{\R^d\setminus B_R} |v|^{-q+\gamma+2s}\dd v 
\\& \lesssim_q t^{\frac{\gamma+2s}{\gamma}}R^{\gamma+2s-2} + t^{\frac{\gamma+2s}{\gamma}-\frac{q}{\gamma}-\frac{d}{2s}}K_0^{\frac{q}{\gamma+2s}}R^{-q+d+\gamma+2s}.
\end{split}
\end{equation}

Taking $R = K_0^\alpha t^{-\beta}$, where 
\begin{equation}
\alpha = \frac{q}{(\gamma+2s)(q-d-2)} , \qquad \beta = \frac{\frac{q}{\gamma} + \frac{d}{2s}}{q-d-\gamma-2s},
\end{equation}
we get that 
\begin{equation}
t^{\frac{\gamma+2s}{\gamma}}\int\limits_{\R^d} f(t,x,v)|v|^{\gamma+2s}\dd v \lesssim_q K^{p_1(\gamma,s,d,q)} t^{p_2(\gamma,s,d,q)},
\end{equation}
where 
\[\begin{split}
p_1(\gamma,s,d,q) &:= \frac{\gamma+2s-2}{\gamma+2s}\frac{q}{q-d-2},\\
p_2(\gamma,s,d,q) &:= \frac{\gamma+2s}{\gamma} -\frac{\gamma+2s-2}{\gamma}\frac{q}{q-d-\gamma-2s} - \frac{d(\gamma+2s-2)}{2s(q-d-\gamma-2s)}.
\end{split}
\]
The key point is that as $q\to \infty$, the exponents  converge $p_1 \to \displaystyle\frac{\gamma+2s-2}{\gamma+2s} <1$ and $p_2\to \displaystyle\frac{2}{\gamma} > 0$ respectively.  Hence by taking $q$ sufficiently large (depending on $\gamma$, $s$, and $d$), we get that 
\begin{equation}
t^{\frac{\gamma+2s}{\gamma}}\int\limits_{\R^d} f(t,x,v)|v|^{\gamma+2s}dv \lesssim K_0^{1-\epsilon}
\end{equation}
for all $(t,x)\in (0,1]\times \T^d$, and some small $\epsilon\in (0,1)$.  
Taking the supremum in $(t,x)$, we thus have that 
\begin{equation}
K_0 \lesssim K_0^{1-\epsilon},
\end{equation}
which implies $K_0 \lesssim 1$. 
Thus, as $K_0$ is bounded by a universal constant, the pointwise bounds of Theorem \ref{t:f-upper-bound} are universal as well.  
\end{proof}

\section*{Acknowledgements}
We would like to thank Luis Silvestre for originally suggesting the problem and for advising the first author.  
The first author was partially supported by an NSF postdoctoral fellowship, NSF DMS 1902750.  The second author was partially supported by a Ralph E. Powe Junior Faculty Enhancement Award from ORAU.

\bibliographystyle{abbrv}
\bibliography{BoltzmannDecayForHardPotentials}

\end{document}